\newtheorem{thm}{Theorem}[section]
\newtheorem{lem}[thm]{Lemma}
\newtheorem{prop}[thm]{Proposition}
\theoremstyle{definition}
\newtheorem{defn}[thm]{Definition}
\newtheorem{example}[thm]{Example}
\newtheorem{notn}[thm]{Notation}
\theoremstyle{remark}
\numberwithin{equation}{section}
\begin{document}

\title{Comparing the $G$-Normal Distribution to its Classical Counterpart}


\author{Erhan Bayraktar}
\address{Department of Mathematics, University of Michigan, Ann Arbor, Michigan 48109}
\email{erhan@umich.edu}
\thanks{This work is supported by the National Science Foundation under grant DMS-0955463. We gratefully \\ \indent acknowledge the anonymous referee for valuable advice on improving the manuscript.}

\author{Alexander Munk}
\address{Department of Mathematics, University of Michigan, Ann Arbor, Michigan 48109}
\email{amunk@umich.edu}

\subjclass[2010]{Primary 60E05; Secondary 60F05, 60J65, 60H30}



\keywords{sublinear expectation, multidimensional $G$-normal distribution, \\ \indent independence.}


\begin{abstract}
In one dimension, the theory of the $G$-normal distribution is well-developed, and many results from the classical setting have a nonlinear counterpart. Significant challenges remain in multiple dimensions, and some of what has already been discovered is quite nonintuitive. By answering several classically-inspired questions concerning independence, covariance uncertainty, and behavior under certain linear operations, we continue to highlight the fascinating range of unexpected attributes of the multidimensional $G$-normal distribution.
\end{abstract}

\maketitle

\section{Introduction}

The $G$-framework, which includes the $G$-normal distribution and $G$-Brownian motion, was initially motivated by the study of risk measures and pricing under volatility uncertainty. Roughly speaking, one can think of these objects as the appropriate analogues of their classical namesakes in a setting of model uncertainty where the relevant collection of probability measures may be singular.

Activity in this area has been considerable since its introduction by Peng (see \cite{peng+abel}, \cite{peng+multidim}), and developments have proceeded at a rapid pace. A great variety of standard theorems from classical probability and stochastic analysis now have versions in the $G$-setting including the law of large numbers (see \cite{peng+lln+clt}, \cite{peng+clt}), the central limit theorem (see \cite{peng+lln+clt}, \cite{peng+clt}, \cite{li+shi}, \cite{hu+zhou+clt}, \cite{zhang+chen}), the martingale representation theorem (see \cite{soner+touzi+zhang}, \cite{song+mart}, \cite{peng+song+zhang}), L{\'e}vy's martingale characterization theorem (see \cite{xu+zhang}, \cite{xu+zhang+prop}, \cite{lin+rep}, \cite{lin}, \cite{song}), and Girsanov's theorem (see \cite{xu+shang+zhang}, \cite{osuka}, \cite{hu+ji+peng+song}). Substantial progress and extensions of this work have been completed in many other directions as well (e.g., see \cite{nutz}, \cite{dolinsky}, \cite{anton}, \cite{nutz+van+handel}, \cite{dolinsky+nutz+soner}). Readers interested in survey articles are referred to \cite{peng+ln}, \cite{peng+survey}, and \cite{peng+sc}.

Fundamental issues linger, especially in multiple dimensions. Much of $G$-stochastic analysis is built upon the $G$-normal distribution, and yet, many important elementary questions about this distribution remain unanswered. Some of what is known is also rather startling. For instance, the following result about the classical normal distribution is false in the $G$-setting (see \cite{peng+sc}):
\begin{quote}
For any $n$-dimensional random vector $Z$, if $\langle v , Z \rangle$ is a normal random variable for all $v \in \mathbb{R}^n$, then $Z$ is a normal random vector.
\end{quote}

Our intuition cannot be trusted when turned to $G$-normal  random vectors. Properties such as the one just mentioned fail due to the nonlinearity of the expectation operator in this framework. Other obstacles include the lack of well-understood tools from the classical theory (e.g., characteristic functions and density functions). Also, the distributional uncertainty associated to a $G$-normal random vector is far more complex than its initial appearance suggests, since viewing a $G$-normal random vector as having some fixed but unknown covariance matrix is usually incorrect.

Faced with these challenges, we asked to what extent additional properties of the classical normal distribution hold for its $G$-counterpart, particularly focusing on behaviors under various linear operations and the relationship between coordinate independence and the covariance matrix. We present our findings concerning the following classical theorems:
\begin{enumerate}[(i)]
\item Let $Z_1, \dots , Z_n$ be i.i.d. normal random variables. If  
\begin{equation*}
U = \displaystyle\sum_{i=1}^n a_i Z_i \hspace{5mm} \text{and} \hspace{5mm} V = \displaystyle\sum_{i=1}^n b_i Z_i
\end{equation*}
for real numbers $a_i, b_i$ satisfying 
\begin{equation*}
\displaystyle\sum_{i=1}^n a_i b_i = 0 ,
\end{equation*}
then $U$ and $V$ are independent.\\
\item Let $Z_1, \dots , Z_n$ be independent normal random variables. For any $m \times n$ real matrix $A$, if 
\begin{equation*}
Z = \left( Z_1 , \dots , Z_n \right)^\top , 
\end{equation*}
then $AZ$ is an $m$-dimensional normal random vector.\\
\item The covariance matrix of a normal random vector is diagonal if and only if its coordinates are (mutually) independent normal random variables.\\
\item If $Z$ is an $n$-dimensional normal random vector, then there exists an invertible $n \times n$ matrix $A$ such that the coordinates of $AZ$ are independent.\\
\end{enumerate}

Theorem \ref{new lin transf} reveals that (i) is no longer true in the $G$-setting. We show in Theorem \ref{my lin transf thm} that (ii) no longer holds either. While this was already known in a few special cases (e.g., see \cite{peng+sc}), our work explores a broad new class of examples and illuminates a surprising dichotomy depending on the rank of the matrix. Theorems \ref{diag not indep} and \ref{thm res 15} indicate that while (iii) is partially true for the $G$-normal distribution, unexpected new constraints on the coordinates are introduced. We end by demonstrating with Theorem \ref{invert not indep} that the analogue of (iv) is false. 

Our proofs often take advantage of a strange phenomenon in this setting: independence can be asymmetric, i.e., $Y$ can be independent from $X$ even if $X$ is not independent from $Y$. The general strategy is to show that this is incompatible with the symmetry relations imposed by the $G$-heat equation associated to the $G$-normal distribution, given a careful choice of parameters. 

These insights expand our knowledge of the remarkable series of behaviors exhibited by the multidimensional $G$-normal distribution. While the ultimate goal is to use these results to broaden our knowledge of $G$-stochastic analysis and its related financial applications, we believe that many more surprises lurk in the answers to further theoretical questions about this object. 

Readers unfamiliar with this area can find a short treatment of relevant background material in Section \ref{back sect}. The specific setup necessary for the statement of our results is in Section \ref{setup sect}. Our main results are contained in Section \ref{mr sect}. 

\section{Background}\label{back sect}

We begin with a brief survey of the theory of sublinear expectation spaces and the $G$-normal distribution. Our focus will be restricted to only those results that are directly needed for our work in the sequel. Readers interested in a more thorough treatment can find further details in \cite{peng+ln}, \cite{peng+survey}, \cite{peng+sc}, \cite{hu+li}, or \cite{luo+jia}, the references from which our discussion is adapted.

Throughout, we let $\Omega$ be a given set and $\mathcal{H}$ be a space of real-valued functions defined on $\Omega$. One should understand $\mathcal{H}$ as a space of random variables on $\Omega$. We will only place minimal emphasis on $\Omega$ and $\mathcal{H}$, but we suppose that $\mathcal{H}$ 
\begin{enumerate}[(i)]
\item is a linear space, \\
\item contains all constant functions, and \\
\item contains $\varphi \left( X_1 , X_2, \dots , X_n \right)$ for every $X_1 , X_2, \dots , X_n \in \mathcal{H}$ and $\varphi \in C_{l.Lip} \left( \mathbb{R}^n \right)$, where $C_{l.Lip} \left( \mathbb{R}^n \right)$ is the set of functions such that there exists $C >0$ and $m \in \mathbb{N}$ (depending on $\varphi$) satisfying
\begin{equation*}
\left| \varphi \left(x \right) - \varphi \left( y \right) \right| \leq C \left( 1 + \left|x \right|^m + \left|y\right|^m \right) \left|x -y \right|
\end{equation*}
for all $x$, $y \in \mathbb{R}^n$. \\
\end{enumerate} 
Our specific choice of test functions, $C_{l.Lip} \left( \mathbb{R}^n \right)$, is only a matter of convenience. Other spaces are also commonly used.

\begin{defn}
A {\it sublinear expectation} is a function $ \hat{\mathbb{E}} : \mathcal{H} \longrightarrow \mathbb{R}$ which is 
\begin{enumerate}[(i)]
\item Monotonic: $\hat{\mathbb{E}} \left[X \right] \leq \hat{\mathbb{E}} \left[Y \right]$ if $X \leq Y$, \\
\item Constant-preserving: $\hat{\mathbb{E}} \left[c \right] = c$ for any $c\in \mathbb{R}$, \\
\item Sub-additive: $\hat{\mathbb{E}} \left[ X + Y\right] \leq  \hat{\mathbb{E}} \left[X \right] + \hat{\mathbb{E}} \left[Y \right]$, and \\
\item Positive homogeneous: $\hat{\mathbb{E}} \left[ \lambda X\right] =  \lambda \hat{\mathbb{E}} \left[X \right]$ for $\lambda \geq 0$. \\
\end{enumerate}
The triple $\left( \Omega, \mathcal{H} , \hat{\mathbb{E}} \right)$ is called a {\it sublinear expectation space}.
\end{defn}

\begin{notn}
Unless stated otherwise, we will always work on some sublinear expectation space $\left( \Omega, \mathcal{H} , \hat{\mathbb{E}} \right)$, and when we have random variables $X_1 , X_2, \dots , X_n \in \mathcal{H}$, we will say that $X$ is an $n$-dimensional random vector and write $X \in \mathcal{H}^n$.
\end{notn}

Great caution is required when manipulating expressions with sublinear expectations due to (iii) and (iv). Most familar operations from the classical theory are simply no longer valid. One situation where a standard technique can be applied is the following.

\begin{lem}\label{lem sub exp}
Consider two random variables $X, Y \in \mathcal{H}$ such that $\hat{\mathbb{E}} \left[Y \right] = - \hat{\mathbb{E}} \left[-Y \right]$. Then 
\begin{equation*}
\hat{\mathbb{E}} \left[X + \alpha Y \right] = \hat{\mathbb{E}} \left[X \right] + \alpha \hat{\mathbb{E}} \left[Y \right] 
\end{equation*}
for all $\alpha \in \mathbb{R}$. 
\end{lem}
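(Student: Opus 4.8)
The plan is to reduce everything to the single case $\alpha = 1$ and then bootstrap to arbitrary real $\alpha$ using positive homogeneity, together with the observation that the hypothesis $\hat{\mathbb{E}}[Y] = -\hat{\mathbb{E}}[-Y]$ is preserved when $Y$ is rescaled by any real constant.

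First I would establish the special case $\hat{\mathbb{E}}[X + Y] = \hat{\mathbb{E}}[X] + \hat{\mathbb{E}}[Y]$. The inequality $\hat{\mathbb{E}}[X+Y] \le \hat{\mathbb{E}}[X] + \hat{\mathbb{E}}[Y]$ is immediate from sub-additivity. For the reverse inequality I would write $X = (X + Y) + (-Y)$ and apply sub-additivity again to get $\hat{\mathbb{E}}[X] \le \hat{\mathbb{E}}[X+Y] + \hat{\mathbb{E}}[-Y]$; invoking the hypothesis in the form $\hat{\mathbb{E}}[-Y] = -\hat{\mathbb{E}}[Y]$ and rearranging yields $\hat{\mathbb{E}}[X] + \hat{\mathbb{E}}[Y] \le \hat{\mathbb{E}}[X+Y]$, which combined with the first inequality gives the claim.

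Next, for $\alpha \ge 0$, I would note that $\alpha Y$ again satisfies the hypothesis of the lemma: by positive homogeneity $\hat{\mathbb{E}}[\alpha Y] = \alpha \hat{\mathbb{E}}[Y]$ and $\hat{\mathbb{E}}[-\alpha Y] = \alpha \hat{\mathbb{E}}[-Y] = -\alpha \hat{\mathbb{E}}[Y]$, so indeed $\hat{\mathbb{E}}[\alpha Y] = -\hat{\mathbb{E}}[-\alpha Y]$. Applying the $\alpha = 1$ case with $\alpha Y$ in place of $Y$ gives $\hat{\mathbb{E}}[X + \alpha Y] = \hat{\mathbb{E}}[X] + \hat{\mathbb{E}}[\alpha Y] = \hat{\mathbb{E}}[X] + \alpha \hat{\mathbb{E}}[Y]$. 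Finally, for $\alpha < 0$, I would apply the previous step to the random variable $-Y$, which also satisfies the hypothesis since $\hat{\mathbb{E}}[-Y] = -\hat{\mathbb{E}}[Y] = -\hat{\mathbb{E}}[-(-Y)]$, together with the nonnegative scalar $-\alpha$, using the identity $X + \alpha Y = X + (-\alpha)(-Y)$.

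There is no real obstacle here; the only point requiring care is that positive homogeneity is available only for nonnegative scalars, which is precisely why the argument must split at the sign of $\alpha$ and why it is worth recording explicitly that the symmetry condition $\hat{\mathbb{E}}[Y] = -\hat{\mathbb{E}}[-Y]$ transfers to $\lambda Y$ for every $\lambda \in \mathbb{R}$.
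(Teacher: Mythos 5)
Your proof is correct: the two-sided use of sub-additivity via the decomposition $X = (X+Y) + (-Y)$ gives the case $\alpha = 1$, and the reduction of general $\alpha$ to this case through positive homogeneity and the sign split is handled properly. The paper states this lemma as standard background without proof, and your argument is precisely the canonical one from the sublinear expectation literature, so there is nothing to add or correct.
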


In the literature, random variables such as $Y$ above are said to have {\it no mean-uncertainty}, a notion which also arises in the context of {\it symmetric $G$-martingales}. We will resort to a notable consequence of this result again and again: if $\hat{\mathbb{E}} \left[Y \right] = \hat{\mathbb{E}} \left[-Y \right] = 0$, then for all $\alpha \in \mathbb{R}$, 
\begin{equation}\label{lin mean cert}
\hat{\mathbb{E}} \left[X + \alpha Y \right]= \hat{\mathbb{E}} \left[X \right].
\end{equation}
\begin{defn}
An $n$-dimensional random vector $Y \in \mathcal{H}^n$ is said to be {\it independent} from an $m$-dimensional random vector $X \in \mathcal{H}^m$ if for all $\varphi \in C_{l.Lip} \left(\mathbb{R}^{m+n} \right)$, we have 
\begin{equation*}
\hat{\mathbb{E}} \left[ \varphi \left( X , Y\right) \right] = \hat{\mathbb{E}} \left[  \hat{\mathbb{E}} \left[ \varphi \left( x , Y\right) \right]_{ x = X} \right] .
\end{equation*}
\end{defn}

As we mentioned previously, independence can be asymmetric when $\hat{\mathbb{E}}$ is not a linear expectation. A now standard example which we will refer to later illustrates this concretely. 

\begin{example}\label{std examp}
Consider random variables $X, Y \in \mathcal{H}$ such that 
\begin{enumerate}[(i)]
\item $\hat{\mathbb{E}} \left[ X \right] = - \hat{\mathbb{E}} \left[- X \right] = 0$,\\
\item $\hat{\mathbb{E}} \left[\lvert X \rvert \right] > 0$, and \\
\item $\hat{\mathbb{E}} \left[ Y^2 \right] > -\hat{\mathbb{E}} \left[ -Y^2 \right]$.\\
\end{enumerate}
If $X$ is independent from $Y$, 
\begin{equation*}
\hat{\mathbb{E}} \left[ X Y^2 \right] = 0 ,
\end{equation*}
while if $Y$ is independent from $X$, 
\begin{equation*}
\hat{\mathbb{E}} \left[ X Y^2 \right] > 0  . 
\end{equation*}
A broad class of situations where (i) , (ii), and (iii) are satisfied occurs when $X \sim \mathcal{N} \left( 0 , \left[ \underline{\sigma}_1^{2} , \overline{\sigma}_1^{2} \right] \right)$ for $0 < \overline{\sigma}_1^{2}$ and $Y \sim \mathcal{N} \left( 0 , \left[ \underline{\sigma}_2^{2} , \overline{\sigma}_2^{2} \right] \right)$ for $\underline{\sigma}_2^{2} < \overline{\sigma}_2^{2}$ (see below for notation).
\end{example}

Ignoring trivial cases, one can actually characterize the distribution of $X$ and $Y$ if $X$ is independent from $Y$ and vice versa (see Proposition \ref{hu+li thm} below). Still, observe that if $\hat{\mathbb{E}}$ is a linear expectation, this definition is equivalent to the classical one. 

\begin{defn}
Let $X$ be an $n$-dimensional random vector, i.e., $X \in \mathcal{H}^n$.
\begin{enumerate}[(i)]
\item The {\it distribution} of $X$, $\mathbb{F}_{X}$, is defined on $C_{l.Lip} \left( \mathbb{R}^n \right)$  by 
\begin{equation*}
\mathbb{F}_{X} \left( \varphi \right) = \hat{\mathbb{E}} \left[ \varphi \left( X \right) \right] .
\end{equation*}
\item $X$ has {\it distributional uncertainty} if $\mathbb{F}_X$ is not a linear expectation ( $\left( \mathbb{R}^n ,  C_{l.lip} \left( \mathbb{R}^n \right), \mathbb{F}_X \right)$ is always a sublinear expectation space). \\
\item If $Y \in \mathcal{H}^n$ is another $n$-dimensional random vector, then $X$ and $Y$ are {\it identically distributed}, denoted $X \sim Y$, if 
\begin{equation*}
\mathbb{F}_X = \mathbb{F}_Y  .
\end{equation*} 
\item If $X$ and $Y$ are identically distributed and $Y$ is independent from $X$, then $Y$ is an {\it independent copy} of $X$. \\
\end{enumerate}
\end{defn}

This notion of being identically distributed is also equivalent to the classical defintion if $\hat{\mathbb{E}}$ is a linear expectation. The sublinear case possesses many interesting new features, but we will only need to know about those pertaining to the {\it $G$-normal distribution} and the {\it maximal distribution}. 

\begin{defn}\label{def g norm}
An $n$-dimensional random vector $X \in \mathcal{H}^n$ is said to be {\it $G$-normally distributed} if for any independent copy of $X$, say $\bar{X}$, we have 
\begin{equation*}
a X + b \bar{X} \sim \sqrt{a^2 + b^2} X 
\end{equation*}
for all $a , b \geq 0$. 
\end{defn}

``$G$'' refers to the sublinear function defined on the space of $n \times n$ symmetric matrices,  $\mathbb{S} \left(n \right)$, by 
\begin{equation}\label{G eqn 1}
G \left( A \right) = \frac{1}{2} \hat{\mathbb{E}} \left[ \langle A X , X \rangle \right] .
\end{equation}
For each such function, there exists a unique bounded, closed, convex subset $\Gamma$ of the $n \times n$ positive semidefinite matrices, $\mathbb{S}^+ \left(n \right)$, such that 
\begin{equation}\label{G eqn 2}
G \left(A \right) = \frac{1}{2} \displaystyle\sup_{B \in \Gamma} \text{tr} \left[AB \right] .
\end{equation}
Conversely, given any $\Gamma$ with these properties, there exists a $G$-normal random vector $X$ such that (\ref{G eqn 1}) and (\ref{G eqn 2}) hold. 

$\Gamma$ completely determines the distribution of a $G$-normal random vector $X$, and in fact, one can loosely interpret $\Gamma$ as describing the covariance uncertainty of $X$. As we remarked above, some care must be exercised since viewing $X$ as possessing a classical normal distribution with some fixed but unknown covariance matrix selected from $\Gamma$ is not generally correct. However, if $\Gamma$ contains only one element, then $X$ is a classical normal random vector with mean zero and covariance $\Gamma$.

\begin{notn}
We write $X \sim \mathcal{N} \left( 0 , \Gamma \right)$. If $n = 1$, then $\Gamma =  \left[ \underline{\sigma}^{2} , \overline{\sigma}^{2} \right]$ for $0 \leq \underline{\sigma}^{2} \leq \overline{\sigma}^{2}$ given by
\begin{equation*}
\overline{\sigma}^2 = \hat{\mathbb{E}} \left[ X^2 \right] \quad \text{and} \quad \underline{\sigma}^2 = - \hat{\mathbb{E}} \left[ -X^2 \right]  .
\end{equation*}
\end{notn}

We will frequently use the following important basic properties of a $G$-normal random vector. 

\begin{lem}\label{basic props known}
Let $X = \left( X_1 , \dots , X_n \right)$ be an $n$-dimensional $G$-normal random vector, i.e., $X \sim \mathcal{N} \left( 0  , \Gamma \right)$. Then
\begin{enumerate}[(i)]
\item $\hat{\mathbb{E}} \left[X_i \right] = \hat{\mathbb{E}} \left[-X_i \right] = 0$  for all $i$;\\
\item $-X$ and $X$ are identically distributed, i.e., $-X \sim \mathcal{N} \left( 0  , \Gamma \right)$;\\
\item\label{peng inn prod} for all $v \in \mathbb{R}^n$, the random variable $\langle v, X \rangle$ is $\mathcal{N} \left( 0, \left[ \underline{\sigma}^{2} , \overline{\sigma}^{2} \right] \right)$-distributed, where 
\begin{equation*}
\overline{\sigma}^{2} = \hat{\mathbb{E}} \left[\langle  v, X \rangle^2  \right] \quad \text{and} \quad \underline{\sigma}^{2} = - \hat{\mathbb{E}} \left[-\langle v, X \rangle^2  \right] \text{; and}
\end{equation*}
\item for all $m \times n$ real matrices $M$, $MX \sim \mathcal{N} \left( 0 , M \Gamma M^\top \right)$. \\
\end{enumerate}
\end{lem}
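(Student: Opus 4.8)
The plan is to obtain (i) directly from Definition~\ref{def g norm} and then to deduce (ii), (iii), and (iv) simultaneously from one computation: a linear image of a $G$-normal vector is again $G$-normal, with $\Gamma$ replaced by its linear image.

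\emph{Part (i).} Let $\bar{X}$ be an independent copy of $X$, which exists since $X$ is $G$-normal. Putting $a=b=1$ in Definition~\ref{def g norm} gives $X+\bar{X}\sim\sqrt{2}\,X$, so for each $i$, evaluating $\hat{\mathbb{E}}$ on the test function $(x,y)\mapsto x_i+y_i$ and using positive homogeneity on the right gives $\hat{\mathbb{E}}[X_i+\bar{X}_i]=\sqrt{2}\,\hat{\mathbb{E}}[X_i]$. On the other hand, since $\bar{X}$ is independent from $X$ and constant shifts pass through $\hat{\mathbb{E}}$ (sub-additivity and constant-preservation), the left side is $\hat{\mathbb{E}}[\hat{\mathbb{E}}[x_i+\bar{X}_i]_{x=X}]=\hat{\mathbb{E}}[X_i]+\hat{\mathbb{E}}[\bar{X}_i]=2\hat{\mathbb{E}}[X_i]$, the last equality using $\bar{X}\sim X$. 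Hence $(2-\sqrt{2})\hat{\mathbb{E}}[X_i]=0$, so $\hat{\mathbb{E}}[X_i]=0$; the same argument with the test function $-(x_i+y_i)$ gives $\hat{\mathbb{E}}[-X_i]=0$.

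\emph{Parts (ii), (iii), (iv).} Fix an $m\times n$ real matrix $M$. I first check that $M\bar{X}$ is an independent copy of $MX$: for $\varphi\in C_{l.Lip}(\mathbb{R}^{2m})$ the map $(x,y)\mapsto\varphi(Mx,My)$ lies in $C_{l.Lip}(\mathbb{R}^{2n})$, so unwinding ``$\bar{X}$ independent from $X$'' yields $\hat{\mathbb{E}}[\varphi(MX,M\bar{X})]=\hat{\mathbb{E}}[\hat{\mathbb{E}}[\varphi(z,M\bar{X})]_{z=MX}]$, which is exactly ``$M\bar{X}$ independent from $MX$''; and $M\bar{X}\sim MX$ follows from $\bar{X}\sim X$ by composing with $z\mapsto\varphi(Mz)$. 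Then for $a,b\ge 0$, Definition~\ref{def g norm} gives $a(MX)+b(M\bar{X})=M(aX+b\bar{X})\sim M(\sqrt{a^2+b^2}\,X)=\sqrt{a^2+b^2}\,(MX)$, so $MX$ is $G$-normal. Its generating function satisfies $G_{MX}(A)=\frac{1}{2}\hat{\mathbb{E}}[\langle AMX,MX\rangle]=\frac{1}{2}\hat{\mathbb{E}}[\langle (M^\top AM)X,X\rangle]=\frac{1}{2}\sup_{B\in\Gamma}\mathrm{tr}[M^\top AMB]=\frac{1}{2}\sup_{B\in\Gamma}\mathrm{tr}[A(MBM^\top)]$, using the cyclic invariance of the trace together with (\ref{G eqn 1})--(\ref{G eqn 2}). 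Since $M\Gamma M^\top:=\{MBM^\top:B\in\Gamma\}$ is a bounded, convex, and (as a continuous image of a compact set) closed subset of $\mathbb{S}^{+}(m)$, it is the set attached to $G_{MX}$; and because $\Gamma$ determines the law of a $G$-normal vector, $MX\sim\mathcal{N}(0,M\Gamma M^\top)$. This is (iv). Taking $M=-I_n$ gives $-X\sim\mathcal{N}(0,\Gamma)$, which is (ii). Taking $M=v^\top$ gives $\langle v,X\rangle\sim\mathcal{N}(0,v^\top\Gamma v)$, a one-dimensional $G$-normal law; writing $v^\top\Gamma v=[\underline{\sigma}^2,\overline{\sigma}^2]$ (a closed subinterval of $[0,\infty)$), the one-dimensional convention — equivalently the identities $\hat{\mathbb{E}}[\langle v,X\rangle^2]=2G(vv^\top)=\sup_{B\in\Gamma}v^\top Bv$ and $-\hat{\mathbb{E}}[-\langle v,X\rangle^2]=\inf_{B\in\Gamma}v^\top Bv$ — gives $\overline{\sigma}^2=\hat{\mathbb{E}}[\langle v,X\rangle^2]$ and $\underline{\sigma}^2=-\hat{\mathbb{E}}[-\langle v,X\rangle^2]$, which is (iii).

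\emph{Main obstacle.} None of the steps is deep. The one requiring care is the bookkeeping that upgrades ``$\bar{X}$ independent from $X$'' to ``$M\bar{X}$ independent from $MX$'': one must keep the frozen-variable structure $\hat{\mathbb{E}}[\,\cdot\,]_{x=X}$ straight and verify that every auxiliary function written down — in particular $z\mapsto\hat{\mathbb{E}}[\varphi(z,M\bar{X})]$ — lies in $C_{l.Lip}$, so that each expectation is legitimate. The only genuinely external input is the stated background fact that a bounded closed convex $\Gamma\subseteq\mathbb{S}^{+}$ determines the $G$-normal distribution uniquely; granting that, the rest is routine.
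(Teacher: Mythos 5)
The paper states Lemma~\ref{basic props known} without proof, citing it as background from the literature (it is standard material from Peng's lecture notes), so there is no in-paper argument to compare against. Your proof is correct and is essentially the canonical one: part (i) via the scaling identity $X+\bar{X}\sim\sqrt{2}\,X$ combined with translation invariance of $\hat{\mathbb{E}}$ under constants, and parts (ii)--(iv) by verifying that $M\bar{X}$ is an independent copy of $MX$, pushing the defining relation of Definition~\ref{def g norm} through $M$, and identifying the new generating function as $A\mapsto\frac{1}{2}\sup_{B\in\Gamma}\mathrm{tr}[A(MBM^\top)]$ via cyclicity of the trace and the uniqueness of the convex set in (\ref{G eqn 2}). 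You correctly flag the only points needing care — that $z\mapsto\hat{\mathbb{E}}[\varphi(z,M\bar{X})]$ lies in $C_{l.Lip}(\mathbb{R}^m)$ (which follows from sub-additivity and finiteness of moments) and that $M\Gamma M^\top$ is compact and convex — so nothing is missing.
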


Perhaps the deepest known property of a $G$-normal random vector is its intimate connection to the so-called ``$G$-heat equation'', a parabolic PDE generalizing the classical heat equation. 

\begin{prop}\label{g heat eqn thm}
Let $X$ be an $n$-dimensional $G$-normal random vector, i.e., $X \sim \mathcal{N} \left( 0  , \Gamma \right)$. For all $\varphi \in C_{l.Lip} \left(\mathbb{R}^n \right)$, the function
\begin{equation*}
u \left(t, x \right) = \hat{\mathbb{E}} \left[ \varphi \left( x + \sqrt{t}X \right) \right]  , \quad \left(t, x\right) \in \left[0 , \infty \right) \times \mathbb{R}^n
\end{equation*}
is the unique viscosity solution of the following PDE:
\begin{align*}
\partial_t u - G \left( D^2 u \right) &= 0 , \quad \left( t , x \right) \in \left( 0 , \infty \right) \times \mathbb{R}^n \\
u \left( 0 , x \right)  &= \varphi \left( x\right)  , \quad x \in \mathbb{R}^n  , 
\end{align*}
where $D^2 u = \left( \partial^{2}_{x^i x^j } u \right)_{ij}$. 
\end{prop}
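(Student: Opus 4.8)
The plan is to follow the standard route for identifying such a stochastic representation with the viscosity solution of a nonlinear parabolic PDE, carried out in the sublinear framework: establish the regularity of $u$, prove a semigroup (dynamic-programming) identity for $u$, extract the PDE from a Taylor expansion, and then borrow uniqueness from viscosity-solution theory. For the regularity, note first that $x \mapsto |x|^k$ lies in $C_{l.Lip}(\mathbb{R}^n)$ for every integer $k \geq 1$, so all moments $\hat{\mathbb{E}}[|X|^k]$ are finite. Combining this with the defining inequality of $C_{l.Lip}$ and the monotonicity and sub-additivity of $\hat{\mathbb{E}}$ gives, for $s,t$ in a bounded interval and a suitable constant $C$ and exponent $m$,
\[
\left| u(t,x) - u(s,y) \right| \leq \hat{\mathbb{E}}\left[ \left| \varphi\left( x + \sqrt{t}X \right) - \varphi\left( y + \sqrt{s}X \right) \right| \right] \leq C\left( 1 + |x|^m + |y|^m \right)\left( |x - y| + \left| \sqrt{t} - \sqrt{s} \right| \right) ,
\]
so that $u$ has polynomial growth, $u(t,\cdot) \in C_{l.Lip}(\mathbb{R}^n)$ uniformly for $t$ in compact sets, $t \mapsto u(t,x)$ is $\tfrac{1}{2}$-H\"older, and $u(0,x) = \hat{\mathbb{E}}[\varphi(x)] = \varphi(x)$; in particular $u \in C([0,\infty)\times\mathbb{R}^n)$.

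The key structural step is the semigroup identity: for all $s, t \geq 0$,
\[
u(t+s,x) = \hat{\mathbb{E}}\left[ u\left( s, x + \sqrt{t}X \right) \right] .
\]
To prove it, let $\bar{X}$ be an independent copy of $X$. By Definition \ref{def g norm}, $x + \sqrt{t+s}X \sim x + \sqrt{t}X + \sqrt{s}\bar{X}$, and since $\bar{X}$ is independent from $X$ and identically distributed to it,
\[
\hat{\mathbb{E}}\left[ \varphi\left( x + \sqrt{t}X + \sqrt{s}\bar{X} \right) \right] = \hat{\mathbb{E}}\left[ \hat{\mathbb{E}}\left[ \varphi\left( y + \sqrt{s}\bar{X} \right) \right]_{y = x + \sqrt{t}X} \right] = \hat{\mathbb{E}}\left[ u\left( s, x + \sqrt{t}X \right) \right] ,
\]
where the use of the definition of independence is legitimate because $y \mapsto u(s,y)$ belongs to $C_{l.Lip}(\mathbb{R}^n)$ by the previous step.

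Next I would check that $u$ solves the PDE in the viscosity sense at an interior point $(t,x)$ with $t > 0$. Let $\psi$ be a $C^{1,2}$ function touching $u$ from above at $(t,x)$; after a routine truncation arranging $\psi \geq u$ globally (permissible since $u$ has polynomial growth), the semigroup identity gives, for small $\delta > 0$,
\[
\psi(t,x) = u(t,x) = \hat{\mathbb{E}}\left[ u\left( t - \delta, x + \sqrt{\delta}X \right) \right] \leq \hat{\mathbb{E}}\left[ \psi\left( t - \delta, x + \sqrt{\delta}X \right) \right] .
\]
Taylor-expanding $\psi$ around $(t,x)$ in powers of $\sqrt{\delta}$: the first-order term $\sqrt{\delta}\,\langle \nabla_x \psi(t,x), X \rangle$ is removed by Lemma \ref{lem sub exp} (equivalently (\ref{lin mean cert})), since by Lemma \ref{basic props known} the variable $\langle \nabla_x \psi(t,x), X \rangle$ is one-dimensional $G$-normal and hence has no mean-uncertainty; the second-order term is identified through $\hat{\mathbb{E}}[\langle A X, X \rangle] = 2G(A)$ from (\ref{G eqn 1}); and the remainder (higher-order and cross terms) satisfies $\hat{\mathbb{E}}[|\,\cdot\,|] = o(\delta)$ by the moment bounds. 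Dividing by $\delta$ and letting $\delta \downarrow 0$ yields $\partial_t \psi(t,x) - G\left( D^2\psi(t,x) \right) \leq 0$; the reverse inequality for test functions touching from below follows symmetrically, using sub-additivity in the opposite direction. Hence $u$ is a viscosity solution with $u(0,\cdot) = \varphi$.

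Finally, uniqueness. Because $G$ is sublinear --- hence convex, positively homogeneous, and Lipschitz on $\mathbb{S}(n)$ --- and degenerate elliptic (monotone in $D^2 u$, as $\Gamma \subset \mathbb{S}^+(n)$), the $G$-heat equation is a proper parabolic equation for which the comparison principle for viscosity solutions of polynomial growth holds; I would simply invoke that theory rather than reprove it. I expect the main obstacle to be the viscosity-solution verification above: making the Taylor-expansion argument fully rigorous with only sublinearity available --- controlling the cross and higher-order error terms uniformly, passing $\hat{\mathbb{E}}$ through them via sub-additivity rather than linearity, and justifying the global truncation of the test function --- together with the attendant (routine but indispensable) regularity estimates that license the localization.
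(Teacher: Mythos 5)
The paper does not prove Proposition \ref{g heat eqn thm}: it appears in the background section as a known result imported from the cited references (it is due to Peng; see \cite{peng+sc}, \cite{peng+ln}), so there is no in-paper proof to compare against. Your outline reproduces the standard argument from that literature --- polynomial-growth and Lipschitz estimates for $u$, the semigroup identity obtained from the defining relation $aX + b\bar{X} \sim \sqrt{a^2+b^2}\,X$ combined with the independence of $\bar{X}$ from $X$, a Taylor expansion in which the first-order term is killed because $\langle v, X\rangle$ has no mean-uncertainty and the second-order term produces $G$ via $\hat{\mathbb{E}}\left[\langle AX, X\rangle\right] = 2G(A)$, and a comparison principle for uniqueness --- and its main steps are sound.
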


We will not need the full theory of viscosity solutions in our development, as the solution to the equation above is actually a classical solution if $G$ is non-degenerate. 

We will use the remainder of this section only for the proof of Theorem \ref{thm res 15}.

\begin{notn}
We let $C_{b.Lip} \left( \mathbb{R}^n \right)$ denote the space of bounded Lipschitz functions on $\mathbb{R}^n$. 
\end{notn}

We recall that \cite{hu+li} uses $C_{b.Lip} \left( \mathbb{R}^n \right)$ as the space of test functions instead of $C_{l.Lip} \left( \mathbb{R}^n \right)$. This technicality does not matter for our proof of Theorem \ref{thm res 15}, as we will explain later.
\begin{defn}\label{max def}
An $n$-dimensional random vector $X \in \mathcal{H}^n$ is called {\it maximally distributed} if there exists a closed set $\Gamma \subset \mathbb{R}^n$ such that 
\begin{equation*}
\hat{\mathbb{E}} \left[ \varphi \left( X \right) \right] = \displaystyle\sup_{x \in \Gamma} \varphi \left( x \right) 
\end{equation*}
for all $\varphi \in C_{b.Lip} \left( \mathbb{R}^n \right)$.
\end{defn}

One can understand random variables of this kind as analogues of constants in the sublinear setting. In particular, if $X$ is a $G$-normal random variable with $X \sim \mathcal{N} \left( 0, \left[ \underline{\sigma}^{2} , \overline{\sigma}^{2} \right] \right)$ for some $\underline{\sigma}^2 < \overline{\sigma}^2$, then $X$ is not maximally distributed. Observe that we have presented the definition from \cite{hu+li} rather than the original version in \cite{peng+sc}, as we will need the following theorem from \cite{hu+li}. 

\begin{prop}\label{hu+li thm}
Suppose that the random variable $W \in \mathcal{H}$ has distributional uncertainty and that the random variable $W^\prime \in \mathcal{H}$ is not a constant. If $W$ is independent from $W^\prime$ and vice versa, then $W$ and $W^\prime$ must be maximally distributed.
\end{prop}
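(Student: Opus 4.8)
The plan is to first establish that $W'$ is maximally distributed and then obtain the same conclusion for $W$ by symmetry. The engine is the identity forced by mutual independence: writing the independence of $W$ from $W'$, and separately that of $W'$ from $W$ with the two arguments of the test function interchanged, shows that for every $\varphi \in C_{b.Lip}(\mathbb{R}^2)$ both iterated expectations agree with $\hat{\mathbb{E}}[\varphi(W,W')]$, hence
\[
\hat{\mathbb{E}}\big[\,\hat{\mathbb{E}}[\varphi(x,W')]_{x=W}\,\big] \;=\; \hat{\mathbb{E}}\big[\,\hat{\mathbb{E}}[\varphi(W,y)]_{y=W'}\,\big].
\]

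I would then feed in product test functions $\varphi(x,y) = \phi(x)\psi(y)$ with $\phi, \psi \in C_{b.Lip}(\mathbb{R})$. Write $\overline{\phi} = \hat{\mathbb{E}}[\phi(W)]$, $\underline{\phi} = -\hat{\mathbb{E}}[-\phi(W)]$, and likewise $\overline{\psi}, \underline{\psi}$ for $\psi(W')$. By positive homogeneity the inner expectations are evaluated pointwise: $\hat{\mathbb{E}}[\phi(x)\psi(W')]$ equals $\phi(x)\overline{\psi}$ where $\phi(x) \ge 0$ and $\phi(x)\underline{\psi}$ where $\phi(x) < 0$, i.e.\ $\underline{\psi}\,\phi(x) + (\overline{\psi} - \underline{\psi})\phi^{+}(x)$, and symmetrically on the other side. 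The displayed identity then reduces to the key relation
\[
\hat{\mathbb{E}}\big[\,\underline{\psi}\,\phi(W) + (\overline{\psi} - \underline{\psi})\phi^{+}(W)\,\big] \;=\; \hat{\mathbb{E}}\big[\,\underline{\phi}\,\psi(W') + (\overline{\phi} - \underline{\phi})\psi^{+}(W')\,\big],
\]
valid for all $\phi, \psi \in C_{b.Lip}(\mathbb{R})$, coupling the mean-uncertainty of functions of $W$ with that of functions of $W'$.

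Next I would invoke the distributional uncertainty of $W$: it furnishes a $\phi$ with $\underline{\phi} < \overline{\phi}$, and --- since $C_{b.Lip}$ functions are bounded, an affine adjustment costs nothing --- one with $\phi \ge 0$ and $\underline{\phi}$ small compared with $\overline{\phi} - \underline{\phi}$. For such $\phi$ the left side of the key relation collapses (using $\phi^{+} = \phi$) to $(\overline{\psi})^{+}\overline{\phi} - (\overline{\psi})^{-}\underline{\phi}$, and sandwiching the right side by subadditivity and monotonicity yields, in the limit, the sharp constraint
\[
\big(\hat{\mathbb{E}}[\psi(W')]\big)^{+} = \hat{\mathbb{E}}\big[(\psi(W'))^{+}\big] \qquad \text{for all } \psi \in C_{b.Lip}(\mathbb{R});
\]
applied to $\psi - c$ this gives $\hat{\mathbb{E}}[(\psi(W') - c)^{+}] = 0$ whenever $c \ge \hat{\mathbb{E}}[\psi(W')]$. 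Using the standard representation of the distribution of $W'$ as $\hat{\mathbb{E}}[\varphi(W')] = \sup_{\mu \in \mathcal{M}} \int \varphi \, d\mu$, each $\mu \in \mathcal{M}$ then charges $\{x : \psi(x) \le \hat{\mathbb{E}}[\psi(W')]\}$ with full mass; intersecting a countable subfamily of such sets (second countability of $\mathbb{R}$), every $\mu \in \mathcal{M}$ is carried by the closed set $\Gamma := \{x : \psi(x) \le \hat{\mathbb{E}}[\psi(W')] \text{ for all } \psi \in C_{b.Lip}(\mathbb{R})\}$, whence $\hat{\mathbb{E}}[\psi(W')] = \sup_{x \in \Gamma} \psi(x)$ for all $\psi$ --- that is, $W'$ is maximally distributed. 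Finally, a maximally distributed random variable that is not a constant must have distributional uncertainty ($\Gamma$ then contains at least two points), so the hypotheses are now symmetric in $W$ and $W'$, and running the argument again with the roles exchanged shows $W$ is maximally distributed as well.

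I expect the main obstacle to be this third step. Extracting the sharp constraint on $W'$ requires exploiting $W$'s distributional uncertainty carefully --- choosing $\phi$ so that $\underline{\phi}$ is negligible beside $\overline{\phi} - \underline{\phi}$, which may need a limiting procedure or, in less favorable configurations, test functions beyond products --- and converting that constraint into genuine maximal distribution leans on the dual representation of the sublinear expectation together with a support-type argument. The sign bookkeeping behind the key relation is routine but must be carried out with care.
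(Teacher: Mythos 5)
First, a point of order: the paper does not prove this proposition at all --- it is imported from \cite{hu+li} precisely so that it can be used as a black box in the proof of Theorem \ref{thm res 15} --- so there is no in-paper argument to compare yours against; I can only assess the attempt on its own terms. Your first two steps are correct: mutual independence does force $\hat{\mathbb{E}}[\hat{\mathbb{E}}[\varphi(x,W')]_{x=W}] = \hat{\mathbb{E}}[\hat{\mathbb{E}}[\varphi(W,y)]_{y=W'}]$, and for $\varphi(x,y)=\phi(x)\psi(y)$ the inner expectations evaluate exactly as you say, yielding your key relation. The final step --- passing from $\hat{\mathbb{E}}[(\psi(W')-c)^+]=0$ for all $c\geq\hat{\mathbb{E}}[\psi(W')]$ to maximal distribution via the representing family of measures and a Lindel\"of-type argument --- is also essentially sound. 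The genuine gap is the one you flagged yourself, and it is not a removable technicality: the claim that an affine adjustment produces $\phi\geq 0$ with $\underline{\phi}$ small compared with $\overline{\phi}-\underline{\phi}$ is false. Replacing $\phi$ by $a\phi+b$ with $a>0$ scales the gap by $a$, while nonnegativity forces $b\geq -a\inf\phi$ and hence $\underline{(a\phi+b)}\geq a(\underline{\phi}-\inf\phi)$; the ratio $\underline{\phi}/(\overline{\phi}-\underline{\phi})$ that you need to send to zero is invariant under such adjustments. Worse, it can be bounded below by $1$ uniformly over all nonnegative test functions: for the sublinear distribution $\hat{\mathbb{E}}[\varphi(W)]=\max_{p\in[1/3,2/3]}\left(p\varphi(0)+(1-p)\varphi(1)\right)$, which has distributional uncertainty, every $\phi\geq 0$ satisfies $\underline{\phi}\geq\frac{1}{3}\left(\phi(0)+\phi(1)\right)\geq\frac{1}{3}\left|\phi(0)-\phi(1)\right|=\overline{\phi}-\underline{\phi}$. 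For such a $W$ your third step cannot even begin, and the sandwich only yields $\hat{\mathbb{E}}[\psi^+(W')]\leq\hat{\mathbb{E}}[\psi(W')]+\frac{\underline{\phi}}{\overline{\phi}-\underline{\phi}}\left(\overline{\psi}-\underline{\psi}\right)$, which is strictly weaker than the sharp constraint you need.

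A further warning sign that the computation as organized cannot suffice: when $\phi\geq 0$ and $\psi$ has constant sign, both sides of your key relation collapse to the same number, so all the information sits in sign-changing $\psi$, where the right-hand side irreducibly mixes $\psi$ and $\psi^+$. The normalization that \emph{is} always available is $\underline{\phi}=0$ (subtract the lower expectation), at the price of letting $\phi$ change sign; with $\underline{\phi}=\underline{\psi}=0$ the two iterated expectations give $\hat{\mathbb{E}}[\phi(W)]\,\hat{\mathbb{E}}[\psi^+(W')]=\hat{\mathbb{E}}[\psi(W')]\,\hat{\mathbb{E}}[\phi^+(W)]$, i.e., the ratio $\hat{\mathbb{E}}[\phi^+(W)]/\hat{\mathbb{E}}[\phi(W)]$ is a universal constant $\geq 1$ shared by $W$ and $W'$. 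The real work --- showing this constant is $1$ and obtaining the corresponding control on the upper tail $\hat{\mathbb{E}}[(\psi(W')-\overline{\psi})^+]$ --- is exactly what your proposal leaves open, and it is where the argument of \cite{hu+li} earns its keep. As it stands, the proposal is an accurate reduction plus a correct endgame, with the central step missing.
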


\section{Basic Setup}\label{setup sect}

Throughout our work below, we consider random vectors $X, Y \in \mathcal{H}^n$ such that
\begin{enumerate}[(i)]
\item $X = \left(X_1 , \dots , X_n \right)$ is a $G$-normal random vector, i.e., $X \sim \mathcal{N} \left( 0, \Gamma \right)$; and \\
\item $Y =  \left( Y_1 , \dots , Y_{n} \right)$, where 
\begin{enumerate}
\item\label{sigma ineq} $Y_1 \sim \mathcal{N} \left( 0, \left[ \underline{\sigma}^{2} , \overline{\sigma}^{2} \right] \right)$ for some $0 < \underline{\sigma}^2 < \overline{\sigma}^2$; \\
\item $Y_{i+1}$ and $Y_i$ are identically distributed, i.e., $Y_{i+1} \sim Y_{i}$; and\\
\item $Y_{i+1}$ is independent from $\left( Y_1 , \dots , Y_{i} \right)$.\\
\end{enumerate}
\end{enumerate}
Although a seemingly innocent construction, we will see that $Y$ is a rich source of counterexamples when evaluating whether or not standard classical theorems about the normal distribution hold in the $G$-framework.
The inequality
\begin{equation*}
\underline{\sigma}^2 < \overline{\sigma}^2
\end{equation*}
is critical for us, as it implies that the coordinates of $Y$ are not classical normal random variables. Because our objective is to compare the properties of the classical normal distribution with its $G$-counterpart, we have added the assumption
\begin{equation*}
0 < \underline{\sigma}^2
\end{equation*}
for convenience. As observed above, this condition ensures that the solution to the $G$-heat equation is actually classical. 

Whenever restating classical results to facilitate our comparisons, we will always call classical random vectors ``$Z$''.

\section{Main Results}\label{mr sect}

\subsection{Behavior Under Linear Combinations}\label{lin comb sect}

Recall that in the classical setting, we have the following result:
\begin{quote} 
Let $Z_1, \dots , Z_n$ be i.i.d. normal random variables. If  
\begin{equation*}
U = \displaystyle\sum_{i=1}^n a_i Z_i \quad \text{and} \quad V = \displaystyle\sum_{i=1}^n b_i Z_i
\end{equation*}
for real numbers $a_i$, $b_i$ satisfying 
\begin{equation*}
\displaystyle\sum_{i=1}^n a_i b_i = 0 ,
\end{equation*}
then $U$ and $V$ are independent.
\end{quote}
The corresponding statement does not hold in the $G$-framework.

\begin{thm}\label{new lin transf}
Let $a_1 = 1$, $a_2 = 1$, $b_1 = 1$, and $b_2 = -1$. Set the remaining constants equal to zero, i.e.,
\begin{equation*}
 U = Y_1 + Y_2 \hspace{5mm} \text{and} \hspace{5mm} V = Y_1 -Y_2 .
\end{equation*} Then 
\begin{equation*}
\displaystyle\sum_{i=1}^n a_i b_i = 0,
\end{equation*}
but $U$ is not independent from $V$ and vice versa.
\end{thm}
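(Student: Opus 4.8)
The plan is to exploit the asymmetry of independence in the $G$-setting together with the symmetry built into the $G$-heat equation, following the general strategy advertised in the introduction. Recall the setup: $Y_1 \sim \mathcal{N}(0,[\underline{\sigma}^2,\overline{\sigma}^2])$ with $0<\underline{\sigma}^2<\overline{\sigma}^2$, $Y_2 \sim Y_1$, and $Y_2$ is independent from $Y_1$ (but not necessarily vice versa). Set $U = Y_1 + Y_2$ and $V = Y_1 - Y_2$. I would argue by contradiction: suppose $U$ is independent from $V$, or suppose $V$ is independent from $U$, and derive a contradiction in each case by testing against a cleverly chosen polynomial test function and comparing the two iterated-expectation evaluations.

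First I would record the moment information we have for free. By Lemma \ref{basic props known}(i), $\hat{\mathbb{E}}[Y_i] = \hat{\mathbb{E}}[-Y_i] = 0$ for $i=1,2$, and by the construction of $Y$ together with Lemma \ref{basic props known}(iii) one computes $\hat{\mathbb{E}}[Y_i^2] = \overline{\sigma}^2$, $-\hat{\mathbb{E}}[-Y_i^2] = \underline{\sigma}^2$. Since $Y_2$ is independent from $Y_1$, I can compute mixed moments such as $\hat{\mathbb{E}}[Y_1 Y_2^2]$, $\hat{\mathbb{E}}[Y_1^2 Y_2^2]$, $\hat{\mathbb{E}}[Y_1 Y_2]$ by conditioning on $Y_1$ first: e.g. $\hat{\mathbb{E}}[Y_1 Y_2^2] = \hat{\mathbb{E}}\big[\hat{\mathbb{E}}[x Y_2^2]_{x=Y_1}\big] = \hat{\mathbb{E}}[Y_1^+ \overline{\sigma}^2 - Y_1^- \underline{\sigma}^2]$, which is strictly positive because $\underline{\sigma}^2 < \overline{\sigma}^2$ and $\hat{\mathbb{E}}[|Y_1|] > 0$; this is exactly the phenomenon in Example \ref{std examp}. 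The key asymmetric quantity to target is something like $\hat{\mathbb{E}}[U V^2]$ or, better, a function that is built to separate the two hypotheses. Note $UV^2 = (Y_1+Y_2)(Y_1-Y_2)^2$; expanding, this is a cubic in $Y_1,Y_2$ whose sublinear expectation I can evaluate under the single assumption ``$Y_2$ independent from $Y_1$'' by repeated conditioning, obtaining some explicit value $c$ depending on $\overline{\sigma}^2,\underline{\sigma}^2$. Separately, if $U$ were independent from $V$, then $\hat{\mathbb{E}}[UV^2] = \hat{\mathbb{E}}\big[\hat{\mathbb{E}}[u V^2]_{u=U}\big]$; since $\hat{\mathbb{E}}[U] = \hat{\mathbb{E}}[-U] = 0$ and $\hat{\mathbb{E}}[V^2 \cdot (\pm 1)]$ is sign-definite, this forces $\hat{\mathbb{E}}[UV^2] = 0$ by the mean-certainty identity \eqref{lin mean cert}. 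Matching this against $c$ and showing $c \neq 0$ kills the hypothesis that $U$ is independent from $V$. For the reverse direction — $V$ independent from $U$ — I would run the symmetric computation with the roles of $U$ and $V$ swapped (using $VU^2$, or exploiting $-Y_2 \sim Y_2$ so that $U$ and $V$ are interchangeable in distribution in the relevant sense), again comparing the forced value $0$ against the value computed from ``$Y_2$ independent from $Y_1$.''

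The main obstacle I anticipate is that the single structural fact available is ``$Y_2$ is independent from $Y_1$'' and nothing is assumed about the reverse; so every moment of $(Y_1,Y_2)$ must be computed by conditioning in that one permissible order, and one must be careful that the test functions used are genuinely in $C_{l.Lip}$ (polynomials are, by the growth bound) and that no illegitimate linearity of $\hat{\mathbb{E}}$ is invoked — only monotonicity, positive homogeneity, sub-additivity, and Lemma \ref{lem sub exp} for the mean-certain pieces. A secondary subtlety is ensuring that the quantity I compute really does distinguish the hypothesis from the forced value; if $\hat{\mathbb{E}}[UV^2]$ happens to vanish, I would instead use a quartic such as $U^2 V^2 = (Y_1^2-Y_2^2)^2$ or a function like $\varphi(U,V) = U^2\psi(V)$ for a suitable convex $\psi$, chosen so that the conditional inner expectation $\hat{\mathbb{E}}[U^2\psi(v)]_{v=V}$ depends nonlinearly on $v$ in a way incompatible with what direct conditioning on $Y_1$ yields. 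In short: compute one carefully chosen mixed moment two ways, get a contradiction, and then repeat with $U,V$ interchanged.
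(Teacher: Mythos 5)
There is a genuine gap at the center of your plan. Your argument hinges on computing $\hat{\mathbb{E}}[UV^2]$ ``by repeated conditioning, obtaining some explicit value $c$'' from the single structural fact that $Y_2$ is independent from $Y_1$, and then showing $c\neq 0$. That computation is not executable with the tools you allow yourself. Expanding, $UV^2=(Y_1+Y_2)(Y_1-Y_2)^2=Y_1^3-Y_1^2Y_2-Y_1Y_2^2+Y_2^3$, so conditioning on $Y_1=\bar{x}$ leaves you with $\bar{x}^3+\hat{\mathbb{E}}\left[\,Y_2^3-\bar{x}Y_2^2\,\right]$ (the term $-\bar{x}^2Y_2$ drops by \eqref{lin mean cert}, but nothing else does). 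Sub-additivity forbids splitting $\hat{\mathbb{E}}\left[\,Y_2^3-\bar{x}Y_2^2\,\right]$ term by term, and the third moment of a $G$-normal variable with $\underline{\sigma}^2<\overline{\sigma}^2$ is not zero and has no elementary closed form --- it requires solving the $G$-heat equation with cubic initial data. The outer expectation over $Y_1$ then compounds the problem. Switching to a quartic such as $U^2V^2$ does not help; the obstruction is structural, not a matter of the particular moment vanishing. (There is also a small slip: the iterated expectation $\hat{\mathbb{E}}\bigl[\hat{\mathbb{E}}[uV^2]_{u=U}\bigr]$ you display is the formula for ``$V$ independent from $U$,'' not ``$U$ independent from $V$''; your conclusion $\hat{\mathbb{E}}[UV^2]=0$ is right but comes from freezing $V$, not $U$.)

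The missing idea --- which you brush against in your parenthetical about $-Y_2\sim Y_2$ but do not deploy --- is that one never needs to compute $\hat{\mathbb{E}}[UV^2]$ at all. Since $Y_2$ and $-Y_2$ are both independent copies of $Y_1$, replacing $Y_2$ by $-Y_2$ inside the iterated expectation swaps $U$ and $V$, giving the exchange symmetry $\hat{\mathbb{E}}[\varphi(V,U)]=\hat{\mathbb{E}}[\varphi(U,V)]$ for every test function, hence $\hat{\mathbb{E}}[UV^2]=\hat{\mathbb{E}}[VU^2]$. On the other hand, after checking that $U\sim V\sim\sqrt{2}\,Y_1\sim\mathcal{N}(0,[2\underline{\sigma}^2,2\overline{\sigma}^2])$, Example \ref{std examp} shows that \emph{either} independence hypothesis forces one of $\hat{\mathbb{E}}[UV^2]$, $\hat{\mathbb{E}}[VU^2]$ to equal $0$ and the other to be strictly positive --- both values come from the assumed independence of $U$ and $V$, not from the underlying structure of $Y$. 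The equality from the symmetry then yields the contradiction in both cases simultaneously. I would rebuild your proof around that symmetry rather than around a direct moment evaluation.
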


In fact, it is not yet known if any non-trivial linear combination of this kind will produce independent random variables. An important classical characterization of the normal distribution, the Skitovich-Darmois theorem, is related to the independence of such linear combinations, and whether or not a $G$-version of this holds is also unknown. 

\begin{proof}
Our strategy will be to show that the random vectors $\left( U, V \right)$ and $\left(V , U \right)$ are identically distributed. On the other hand, if $U$ were independent from $V$ or vice versa, the resulting destruction of symmetry would make this impossible.

A simple calculation shows that $-Y_2$ is an independent copy of $Y_1$, which means that 
\begin{equation*}
V \sim \sqrt{2} Y_1 \sim \mathcal{N} \left( 0, \left[ 2 \underline{\sigma}^{2} , 2 \overline{\sigma}^{2} \right] \right) 
\end{equation*}
by Definition \ref{def g norm}. The same is true for $U$. 

By Example \ref{std examp}, if $U$ is independent from $V$, then
\begin{equation*}
\hat{\mathbb{E}} \left[  U V^2 \right] = 0 
\end{equation*}
and
\begin{equation*}
\hat{\mathbb{E}} \left[ V U^2 \right]  > 0 .
\end{equation*}
If $V$ is independent from $U$, then 
\begin{equation*}
\hat{\mathbb{E}} \left[ V U^2 \right] = 0 
\end{equation*}
and
\begin{equation*}
\hat{\mathbb{E}} \left[  U V^2 \right] > 0  .
\end{equation*}
Hence, exactly one of these must hold:
\begin{enumerate}[(i)]
\item $U$ is independent from $V$ but $V$ is not independent from $U$.\\
\item $V$ is independent from $U$ but $U$ is not independent from $V$.\\
\item $U$ is not independent from $V$ and $V$ is not independent from $U$.\\
\end{enumerate}

Let $S : \mathbb{R}^2 \rightarrow \mathbb{R}^2$ be defined by 
\begin{equation*}
S \left( x, y \right) = \left( x - y , x+y \right) \quad \text{for all } \left(x, y \right) \in \mathbb{R}^2 .
\end{equation*}
Observe that $\varphi \circ S \in C_{l.Lip} \left( \mathbb{R}^2 \right)$ for any $\varphi \in C_{l.Lip} \left( \mathbb{R}^2 \right)$. Since $Y_2$ and $-Y_2$ are each independent copies of $Y_1$, for any $\varphi \in C_{l.Lip} \left( \mathbb{R}^2 \right)$,
\begin{align*}
\hat{\mathbb{E}} \left[ \varphi \left( V, U \right) \right] &=\hat{\mathbb{E}} \left[ \varphi \left( Y_1 -Y_2, Y_1 + Y_2 \right) \right] \\
&=  \hat{\mathbb{E}} \left[\left( \varphi \circ S \right) \left( Y_1, Y_2 \right) \right] \\
&= \hat{\mathbb{E}} \left[ \hat{\mathbb{E}} \left[ \left( \varphi \circ S \right) \left( \bar{x}, Y_2 \right) \right]_{\bar{x}=Y_1 } \right] \\
&= \hat{\mathbb{E}} \left[ \hat{\mathbb{E}} \left[ \left( \varphi \circ S \right) \left( \bar{x}, -Y_2 \right) \right]_{\bar{x}=Y_1 } \right] \\
&=  \hat{\mathbb{E}} \left[\left( \varphi \circ S \right) \left( Y_1, -Y_2 \right) \right] \\
&= \hat{\mathbb{E}} \left[ \varphi \left( Y_1 + Y_2, Y_1 - Y_2 \right) \right] \\
&= \hat{\mathbb{E}} \left[ \varphi \left( U, V \right) \right] .
\end{align*}

Applying this equality to $\varphi \left( x , y \right) = x y^2$, we have
\begin{equation*}
\hat{\mathbb{E}} \left[ V U^2 \right] = \hat{\mathbb{E}} \left[ U V^2  \right] ,
\end{equation*}
which implies that $U$ is not independent from $V$ and vice versa.
\end{proof}

\subsection{Behavior Under General Linear Transformations}\label{lin tranf sect}

Allowing the degenerate case, we have another important property of the classical normal distribution:
\begin{quote}
Let $Z_1, \dots , Z_n$ be independent normal random variables. For any $m \times n$ real matrix $A$, if 
\begin{equation*}
Z = \left( Z_1 , \dots , Z_n \right)^\top , 
\end{equation*}
then $AZ$ is an $m$-dimensional normal random vector.
\end{quote}
The situation in the $G$-framework is far more delicate.
\begin{thm}\label{my lin transf thm}
For any $m \times n$ real matrix $A$,
\begin{enumerate}[(i)]
\item $\left\langle v , AY \right\rangle$  is a $G$-normal random variable with 
\begin{equation*}
\left\langle v , AY \right\rangle \sim \mathcal{N} \left( 0, \left[ \left\| v^\top A\right\|^2 \underline{\sigma}^{2} ,  \left\| v^\top A\right\|^2 \overline{\sigma}^{2} \right] \right)
\end{equation*}
for all $v \in \mathbb{R}^m$;\\
\item if $A$ has rank less than or equal to one, $AY$ is an $m$-dimensional $G$-normal random vector, or more precisely, $AY \sim \mathcal{N} \left( 0 , \Gamma^\prime \right)$, where
\begin{equation*}
\Gamma^\prime = \left\{ u r u^\top : r \in \left[ \left\| w \right\|^2 \underline{\sigma}^{2}  , \left\| w \right\|^2 \overline{\sigma}^{2} \right] \right\}
\end{equation*}
and $A = u w^{\top}$  for $u \in \mathbb{R}^m$, $w \in \mathbb{R}^n$; and\\
\item if $A$ is invertible, $AY$ is not $G$-normally distributed. (In particular, $Y$ is not $G$-normally distributed.)\\
\end{enumerate}
\end{thm}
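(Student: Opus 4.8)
The plan is to establish the three parts in order, bootstrapping each from its predecessors. Writing $w=A^{\top}v$ so that $\langle v,AY\rangle=\langle w,Y\rangle=\sum_{i=1}^n w_iY_i$ and $\|w\|=\|v^{\top}A\|$, part (i) is a statement about linear combinations of the $Y_i$. First I would record the one-dimensional scaling $w_iY_i\sim\mathcal{N}(0,[w_i^2\underline\sigma^2,w_i^2\overline\sigma^2])$, which is Lemma \ref{basic props known}(iv) for the $1\times1$ matrix $w_i$ together with $Y_i\sim Y_1$. The heart is the sub-lemma that if $U\sim\mathcal{N}(0,[a\underline\sigma^2,a\overline\sigma^2])$ and $V\sim\mathcal{N}(0,[b\underline\sigma^2,b\overline\sigma^2])$ with $V$ independent from $U$, then $U+V\sim\mathcal{N}(0,[(a+b)\underline\sigma^2,(a+b)\overline\sigma^2])$. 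I would prove it by conditioning on $U$: the inner expectation $\hat{\mathbb{E}}[\varphi(x+V)]$ is the time-$b$ slice $h(b,x)$ of the one-dimensional $G$-heat flow of Proposition \ref{g heat eqn thm}, and the outer expectation evolves $h(b,\cdot)$ for a further time $a$, so uniqueness (the semigroup property) identifies the result with $h(a+b,0)=\hat{\mathbb{E}}[\varphi(\sqrt{a+b}\,V_0)]$. Part (i) then follows by inducting on $S_k=\sum_{i\le k}w_iY_i$, noting that $w_{k+1}Y_{k+1}$ is independent from the function $S_k$ of $(Y_1,\dots,Y_k)$ and applying the sub-lemma with $a=\sum_{i\le k}w_i^2$ and $b=w_{k+1}^2$.

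For (ii) I would factor a rank-at-most-one $A$ as $A=uw^{\top}$, so that $AY=u\langle w,Y\rangle=u\,\xi$ with $\xi:=\langle w,Y\rangle$. Part (i) makes $\xi$ a one-dimensional $G$-normal variable, $\xi\sim\mathcal{N}(0,[\|w\|^2\underline\sigma^2,\|w\|^2\overline\sigma^2])$, and Lemma \ref{basic props known}(iv) applied to the $m\times1$ matrix $u$ then gives $AY=u\xi\sim\mathcal{N}(0,\Gamma')$ with $\Gamma'=\{uru^{\top}:r\in[\|w\|^2\underline\sigma^2,\|w\|^2\overline\sigma^2]\}$, exactly the asserted form.

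For (iii), invertibility forces $m=n$, and since $Y=A^{-1}(AY)$, Lemma \ref{basic props known}(iv) shows that if $AY$ were $G$-normal then so would $Y$ be; projecting onto the first two coordinates (again Lemma \ref{basic props known}(iv)) further reduces matters to contradicting $G$-normality of $(Y_1,Y_2)$, the case $n\ge2$. Two facts collide. On one hand, directly from the setup and Example \ref{std examp} one has $\hat{\mathbb{E}}[Y_1Y_2^2]>0$, while conditioning on $Y_1$ gives $\hat{\mathbb{E}}[Y_2Y_1^2]=0$ since $\hat{\mathbb{E}}[Y_2x^2]=x^2\hat{\mathbb{E}}[Y_2]=0$ for each $x$; thus $(Y_1,Y_2)$ is not exchangeable. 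On the other hand, I would argue that $G$-normality together with ``$Y_2$ independent from $Y_1$'' forces the generator to decouple: self-similarity of the $G$-normal law turns the independence into a split identity $\hat{\mathbb{E}}[\varphi(\sqrt t\,Y)]=(P^{1}_tP^{2}_t\varphi)(0)$ valid for every $t\ge0$, where $P^{1},P^{2}$ are the one-dimensional $G$-heat semigroups in the two coordinates, each with the common generator $g(\cdot)=\tfrac12(\overline\sigma^2(\cdot)^+-\underline\sigma^2(\cdot)^-)$. Differentiating at $t=0$ and matching against the joint $G$-heat equation of Proposition \ref{g heat eqn thm} yields $G(A)=g(a_{11})+g(a_{22})$ for every symmetric $A$. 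This $G$, hence $\Gamma$, is invariant under interchanging the coordinates, so Lemma \ref{basic props known}(iv) applied to the swap matrix gives $(Y_1,Y_2)\sim(Y_2,Y_1)$ and therefore $\hat{\mathbb{E}}[Y_1Y_2^2]=\hat{\mathbb{E}}[Y_2Y_1^2]$, contradicting the strict inequality above.

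The main obstacle is exactly this decoupling step. Passing from the time-$t$ split formula to the infinitesimal identity $G(A)=g(a_{11})+g(a_{22})$ requires justifying termwise differentiation of the composed nonlinear semigroups $P^{1}_tP^{2}_t$ at $t=0$, together with the identity $\partial_t\hat{\mathbb{E}}[\varphi(\sqrt t\,Y)]|_{t=0}=G(D^2\varphi(0))$; here the standing assumption $0<\underline\sigma^2$, which renders the $G$-heat equation non-degenerate and its solutions classical, is what makes the differentiation legitimate. Everything else — the scaling and semigroup bookkeeping in (i), the factorization in (ii), and the moment computations in (iii) — is routine by comparison.
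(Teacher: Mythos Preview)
Your proposal is correct and follows the same overall architecture as the paper, but you are working harder than necessary in two places, and the step you flag as the ``main obstacle'' is in fact trivial.

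For (i), the paper bypasses the semigroup entirely. Since each $Y_{i+1}$ (or $-Y_{i+1}$, absorbing signs via Lemma~\ref{basic props known}(ii)) is an independent copy of $Y_1$, the defining relation $aX+b\bar X\sim\sqrt{a^2+b^2}\,X$ of Definition~\ref{def g norm}, applied iteratively, gives $\sum_i w_iY_i\sim\|w\|Y_1$ in one line. Your sub-lemma is precisely this fact, and your $G$-heat--equation proof of it is valid but superfluous.

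For (iii), the paper packages your argument as a standalone lemma (Lemma~\ref{thm res 3}) and reaches the same contradiction between $\hat{\mathbb{E}}[W_1W_2^2]=\hat{\mathbb{E}}[W_2W_1^2]$ and Example~\ref{std examp}. The crucial simplification is that the decoupling $G(A)=\bar G(a_{11})+\bar G(a_{22})$ needs no semigroup differentiation: by its very definition,
\[
G(A)=\tfrac12\,\hat{\mathbb{E}}\!\left[\langle AW,W\rangle\right]=\tfrac12\,\hat{\mathbb{E}}\!\left[a_{11}W_1^2+2a_{12}W_1W_2+a_{22}W_2^2\right],
\]
and one application of independence together with (\ref{lin mean cert}) (using $\hat{\mathbb{E}}[W_2]=\hat{\mathbb{E}}[-W_2]=0$) evaluates this directly. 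Your ``main obstacle'' therefore dissolves into a three-line conditioning computation. Conversely, your use of the swap matrix and Lemma~\ref{basic props known}(iv) to obtain $(Y_1,Y_2)\sim(Y_2,Y_1)$ from the symmetric $\Gamma$ is cleaner than the paper's PDE change-of-variables argument in the case $\alpha=1$ relevant here; the paper's version handles general $\alpha>0$, which it reuses in later results. Part (ii) is identical in both treatments.
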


One would expect (i) and (ii); however, (iii) is quite surprising both because of the bifurcation it reveals and its relation to classical theorems. While it was previously known that the classical property above failed to be true in the $G$-setting (e.g., see \cite{peng+sc}), our result provides an expansive new series of cases illustrating this failure. It serves the same purpose with respect to the following classical statement as well:
\begin{quote}
For any $n$-dimensional random vector $Z$, if $\langle v , Z \rangle$ is a normal random variable for all $v \in \mathbb{R}^n$, then $Z$ is an $n$-dimensional normal random vector.
\end{quote}
Whether or not $AY$ is a $G$-normal random vector if $A$ is non-invertible but has rank strictly greater than one remains unclear. 

The most difficult part of the proof is the following lemma, which is a small extension of Exercise 1.15 in \cite{peng+sc}. This lemma will be critical for our proof of Theorems \ref{diag not indep}, \ref{thm res 15}, and \ref{invert not indep} as well.

\begin{lem}\label{thm res 3}
\noindent Let $\alpha > 0$. Suppose that $W_1$ and $W_2$ are two $G$-normal random variables such that
\begin{enumerate}[(i)]
\item $W_1 \sim \mathcal{N} \left( 0, \left[ \underline{\sigma}^2 , \overline{\sigma}^2 \right] \right)$; and \\
\item $W_2 \sim \mathcal{N} \left( 0, \left[ \alpha \underline{\sigma}^2 , \alpha \overline{\sigma}^2 \right] \right)$. \\
\end{enumerate}
If either $W_2$ is independent from $W_1$ or vice versa, then $W =\left( W_1 , W_2 \right)^\top$ is not a 2-dimensional $G$-normal random vector.
\end{lem}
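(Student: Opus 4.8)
The plan is to assume for contradiction that $W = (W_1, W_2)^\top$ is a $2$-dimensional $G$-normal random vector, say $W \sim \mathcal{N}(0, \Gamma)$, and to exploit the asymmetry of independence against the rigidity of the $G$-heat equation. By Proposition \ref{g heat eqn thm}, the function $u(t,x_1,x_2) = \hat{\mathbb{E}}[\varphi(x_1 + \sqrt{t} W_1, x_2 + \sqrt{t} W_2)]$ solves a nonlinear heat equation whose $G$ is built from $\Gamma$; crucially, since $\underline{\sigma}^2 > 0$ (our standing assumption) and $W_1, W_2$ each have nondegenerate variance intervals, $G$ is non-degenerate, so this solution is classical and we may differentiate freely. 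The key structural fact I want to extract is that $\hat{\mathbb{E}}[\langle A W, W \rangle] = 2G(A)$ must be \emph{symmetric} in the roles of $W_1$ and $W_2$ in a way dictated by $\Gamma$, whereas the independence hypothesis forces a one-sided iterated-expectation identity that breaks this symmetry.

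Concretely, the first step is to pin down $\Gamma$, or at least enough of it. From Lemma \ref{basic props known}(\ref{peng inn prod}) applied to the coordinate directions, the ``diagonal'' behavior is forced: $\hat{\mathbb{E}}[W_1^2] = \overline{\sigma}^2$, $-\hat{\mathbb{E}}[-W_1^2] = \underline{\sigma}^2$, and $\hat{\mathbb{E}}[W_2^2] = \alpha\overline{\sigma}^2$, $-\hat{\mathbb{E}}[-W_2^2] = \alpha\underline{\sigma}^2$, so every $B \in \Gamma$ has $B_{11} \in [\underline{\sigma}^2, \overline{\sigma}^2]$ and $B_{22} \in [\alpha\underline{\sigma}^2, \alpha\overline{\sigma}^2]$, with both endpoints attained. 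Next, compute a carefully chosen mixed quantity such as $\hat{\mathbb{E}}[W_1 W_2^2]$ (or $\hat{\mathbb{E}}[(W_1+W_2)^2 \cdot \text{something}]$, i.e. a cubic test function) in two ways. On one hand, $G$-normality gives $W \sim -W$ by Lemma \ref{basic props known}(ii), which forces $\hat{\mathbb{E}}[W_1 W_2^2] = \hat{\mathbb{E}}[-W_1 W_2^2]$, hence $\hat{\mathbb{E}}[W_1 W_2^2] \ge 0$ and $\hat{\mathbb{E}}[-W_1 W_2^2] \ge 0$; symmetrically $\hat{\mathbb{E}}[W_2 W_1^2] = \hat{\mathbb{E}}[-W_2 W_1^2]$. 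On the other hand, if (say) $W_2$ is independent from $W_1$, then $\hat{\mathbb{E}}[W_2 W_1^2] = \hat{\mathbb{E}}[\,\hat{\mathbb{E}}[W_2]\, W_1^2\,] = 0$ since $\hat{\mathbb{E}}[W_2] = 0$; but by the same Example \ref{std examp} mechanism (with $W_1$ playing the role of the nondegenerate-variance variable, using $\underline{\sigma}^2 < \overline{\sigma}^2$), $\hat{\mathbb{E}}[W_1 W_2^2] > 0$. The contradiction will come from showing that $G$-normality of $W$ forces $\hat{\mathbb{E}}[W_1 W_2^2] = \hat{\mathbb{E}}[W_2 W_1^2]$ — a consequence of the $W \sim -W$ symmetry combined with the self-similarity $aW + b\bar W \sim \sqrt{a^2+b^2}\,W$ and the structure of $\Gamma$ — which is impossible if one side is zero and the other strictly positive.

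The main obstacle, and the heart of the argument, is establishing that symmetry identity $\hat{\mathbb{E}}[W_1 W_2^2] = \hat{\mathbb{E}}[W_2 W_1^2]$ purely from $W$ being $G$-normal. The approach I favor is to use the $G$-heat equation directly: set $u(t,x) = \hat{\mathbb{E}}[\varphi(x + \sqrt{t} W)]$ for $\varphi$ a fixed cubic, differentiate the relation $\partial_t u = G(D^2 u)$ at $t=0$, and read off $\hat{\mathbb{E}}[\varphi(W)]$ in terms of $G$ evaluated on Hessians of lower-order data. Because $G(A) = \tfrac12 \sup_{B \in \Gamma} \mathrm{tr}[AB]$ and $\Gamma \subset \mathbb{S}^+(2)$, the relevant expression is symmetric under the reflection $(x_1,x_2) \mapsto (-x_1, x_2)$ or $(x_1, x_2)\mapsto (x_1,-x_2)$ in a manner that equates the two cubic moments; alternatively, one can run the Exercise 1.15 computation from \cite{peng+sc} verbatim to get the scaling $\hat{\mathbb{E}}[\varphi(\sqrt{t}W)] = t^{3/2}\hat{\mathbb{E}}[\varphi(W)]$ for homogeneous cubic $\varphi$, then combine with $W \sim -W$ to kill or relate the odd pieces. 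I expect the bookkeeping of which cubic moments survive and the precise deduction of the symmetry from $\Gamma \subseteq \mathbb{S}^+(2)$ to be the delicate part; everything else (computing $\hat{\mathbb{E}}[W_iW_j^2]$ under independence, invoking Example \ref{std examp}, non-degeneracy giving classical solutions) is routine given the tools already assembled in the excerpt. The case where $W_1$ is independent from $W_2$ rather than the reverse is handled by the identical argument with the roles of the indices swapped, using that the variance ratio $\alpha$ is fixed but the nondegeneracy $\underline{\sigma}^2 < \overline{\sigma}^2$ holds for both coordinates.
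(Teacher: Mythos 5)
Your overall architecture---assume for contradiction that $W$ is $G$-normal, use the independence hypothesis and Example \ref{std examp} to get $\hat{\mathbb{E}}\left[ W_2 W_1^2 \right] = 0$ while $\hat{\mathbb{E}}\left[ W_1 W_2^2 \right] > 0$, and contradict this with a symmetry identity extracted from the $G$-heat equation---is exactly the paper's, and the endgame is sound. The genuine gap sits at the step you yourself flag as the heart of the argument: none of the mechanisms you propose actually produces a relation between $\hat{\mathbb{E}}\left[ W_1 W_2^2 \right]$ and $\hat{\mathbb{E}}\left[ W_2 W_1^2 \right]$. A reflection $\left( x_1 , x_2 \right) \mapsto \left( -x_1 , x_2 \right)$ sends $x_1 x_2^2$ to $-x_1 x_2^2$, never to $x_2 x_1^2$, so combined with $W \sim -W$ it only yields non-negativity of each odd moment separately; and indeed both $\hat{\mathbb{E}}\left[ W_1 W_2^2 \right]$ and $\hat{\mathbb{E}}\left[ -W_1 W_2^2 \right]$ are strictly positive under your hypotheses, so no contradiction can come from that route. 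Likewise the scaling $\hat{\mathbb{E}}\left[ \varphi \left( \sqrt{t} W \right) \right] = t^{3/2} \hat{\mathbb{E}}\left[ \varphi \left( W \right) \right]$ for homogeneous cubics is just positive homogeneity of $\hat{\mathbb{E}}$ and carries no joint information. Relating the two cubic moments requires a symmetry that \emph{transposes} the coordinates, and since $W_1$ and $W_2$ have different variance intervals when $\alpha \neq 1$, the plain swap is not a distributional symmetry; the map that works is the anisotropic swap $S \left( x , y \right) = \left( y / \sqrt{\alpha} , \sqrt{\alpha} x \right)$, and the identity it produces is $\sqrt{\alpha}\, \hat{\mathbb{E}}\left[ W_2 W_1^2 \right] = \hat{\mathbb{E}}\left[ W_1 W_2^2 \right]$ --- note the factor $\sqrt{\alpha}$ missing from your target identity (harmless for the contradiction, since one side vanishes, but a sign that the symmetry you had in mind is not the right one).

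A second, related gap: ``pinning down the diagonal of $\Gamma$'' is not enough to run any PDE symmetry argument. Knowing only that $B_{11} \in \left[ \underline{\sigma}^2 , \overline{\sigma}^2 \right]$ and $B_{22} \in \left[ \alpha \underline{\sigma}^2 , \alpha \overline{\sigma}^2 \right]$ for $B \in \Gamma$ does not determine $G$, hence does not determine which equation $u$ solves, and in particular does not exclude off-diagonal entries that would destroy the invariance under $S$. The missing computation --- which is where the independence hypothesis enters for a second, essential time --- is to evaluate $G \left( A \right) = \frac{1}{2} \hat{\mathbb{E}}\left[ \langle AW , W \rangle \right]$ for an \emph{arbitrary} $A \in \mathbb{S}\left( 2 \right)$ by iterating the expectation: the cross term $2 a_{12} W_1 W_2$ drops out because $\hat{\mathbb{E}}\left[ \pm W_1 W_2 \right] = 0$ together with Lemma \ref{lem sub exp}, giving $G \left( A \right) = \bar{G}\left( a_{11} \right) + \alpha \bar{G}\left( a_{22} \right)$ with $\bar{G}\left( x \right) = \frac{1}{2} \left( \overline{\sigma}^2 x^+ - \underline{\sigma}^2 x^- \right)$. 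This separable form is invariant under precomposition with $S$ (by positive homogeneity of $\bar{G}$), so $u \circ \tilde{S}$ solves the same $G$-heat equation with initial datum $\varphi \circ S$, and uniqueness from Proposition \ref{g heat eqn thm} delivers $\hat{\mathbb{E}}\left[ \varphi \left( W_2 / \sqrt{\alpha} , \sqrt{\alpha} W_1 \right) \right] = \hat{\mathbb{E}}\left[ \varphi \left( W_1 , W_2 \right) \right]$; taking $\varphi \left( x , y \right) = x y^2$ closes the argument. Your reduction of the case where $W_1$ is independent from $W_2$ by swapping indices is fine in spirit (the paper does it by applying Lemma \ref{basic props known}(iv) to the permutation matrix), but the core symmetry identity must be repaired as above before either case goes through.
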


\begin{proof}
We first consider the case where $W_2$ is independent from $W_1$. Suppose instead that $W$ is a $2$-dimensional $G$-normal random vector. Our initial step will be to compute the corresponding $G$-heat equation, which we will use to establish an identity relating the distributions of the random vectors $W =\left( W_1 , W_2 \right)^\top$ and $\left( W_2 , W_1 \right)^\top$. The conclusion will be reached by showing that this ``symmetry'' contradicts the asymmetry induced by our independence assumption.

We can find a bounded, closed, convex subset $\Gamma \subset \mathbb{S}^+ \left(2\right)$ such that 
\begin{equation*}
\frac{1}{2} \hat{\mathbb{E}} \left[ \langle AW , W \rangle \right] = G \left( A \right) = \frac{1}{2} \displaystyle\sup_{B \in \Gamma} \text{tr} \left[ AB \right] 
\end{equation*}
for all $A \in \mathbb{S} \left(2 \right)$. 
Now for all 
\begin{equation*} A = \begin{bmatrix}
a_{11} & a_{12} \\
a_{12} & a_{22} \\
\end{bmatrix} \in \mathbb{S} \left(2 \right)  ,
\end{equation*}
we have 
\begin{align*}
G \left( A \right) &=  \frac{1}{2} \hat{\mathbb{E}} \left[ \left\langle\begin{bmatrix}
a_{11} & a_{12} \\
a_{12} & a_{22} \\
\end{bmatrix} \begin{bmatrix}
W_1 \\
W_2 \\
\end{bmatrix} , \begin{bmatrix}
W_1 \\
W_2 \\ 
\end{bmatrix} \right\rangle \right] \\
&=  \frac{1}{2} \hat{\mathbb{E}} \left[ a_{11} W_{1}^2 + 2 a_{12} W_1 W_2 + a_{22} W_{2}^2 \right] \\
& = \frac{1}{2} \hat{\mathbb{E}} \left[ \hat{\mathbb{E}} \left[ a_{11} \bar{x}^2 + 2 a_{12} \bar{x} W_2 + a_{22} W_{2}^2 \right]_{\bar{x}=W_1 } \right] \\
& = \frac{1}{2} \hat{\mathbb{E}} \left[ \left( a_{11} \bar{x}^2  + 2 a_{12} \bar{x} \hat{\mathbb{E}} \left[  W_2 \right] + \hat{\mathbb{E}} \left[  a_{22} W_{2}^2 \right] \right)_{ \bar{x}=W_1} \right] \\
& = \frac{1}{2} \hat{\mathbb{E}} \left[ \left( a_{11} \bar{x}^2 + \alpha \overline{\sigma}^2 \left(a_{22}^+ \right) - \alpha \underline{\sigma}^2 \left(a_{22}^- \right)  \right)_{ \bar{x}=W_1 } \right] \\
& = \frac{1}{2} \hat{\mathbb{E}} \left[ a_{11} W_{1}^2 \right] +  \frac{\alpha}{2} \left(\overline{\sigma}^2 \left(a_{22}^+ \right) - \underline{\sigma}^2 \left(a_{22}^- \right) \right) \\
&= \frac{1}{2} \left( \overline{\sigma}^2 \left(a_{11}^+ \right) - \underline{\sigma}^2 \left(a_{11}^- \right) \right) +  \frac{\alpha}{2} \left(\overline{\sigma}^2 \left(a_{22}^+ \right) - \underline{\sigma}^2 \left(a_{22}^- \right) \right) \\
&= \bar{G} \left( a_{11} \right) + \alpha \bar{G} \left(a_{22} \right) ,
\end{align*}
where $\bar{G}$ is defined by 
\begin{equation*}
\bar{G} \left( x \right) = \frac{1}{2} \left( \overline{\sigma}^2 \left(x^+ \right) - \underline{\sigma}^2 \left(x^- \right) \right)
\end{equation*}
for all $x \in \mathbb{R}$. A quick calculation verifies that $\Gamma$ must then be given by
\begin{equation*}
\Gamma = \left\{ \begin{bmatrix}
r_1 & 0\\
0 & r_2 \\
\end{bmatrix} : r_1 \in \left[ \underline{\sigma}^2 , \overline{\sigma}^2 \right] , r_2 \in \left[ \alpha \underline{\sigma}^2 , \alpha \overline{\sigma}^2 \right] \right\} .
\end{equation*}

Let $\varphi \in C_{l.Lip} \left( \mathbb{R}^2 \right)$. Define the function $u$ by 
\begin{equation*}
u \left( t, x , y \right) = \hat{\mathbb{E}} \left[ \varphi \left( \left(x, y\right) + \sqrt{t} \left( W_1 , W_2 \right) \right) \right]
\end{equation*}
for all $\left( t, x , y\right) \in \left[ \left. 0 , \infty \right. \right) \times \mathbb{R}^2$. By Proposition \ref{g heat eqn thm}, $u$ is the unique viscosity solution to 
\begin{align*}
\partial_t u - \bar{G} \left( \partial^{2}_{xx} u \right) - \alpha \bar{G} \left( \partial_{yy}^2 u \right) &= 0 , \quad \left( t , x ,y \right) \in \left( 0 , \infty \right) \times \mathbb{R}^2 \\
u \left( 0 , x , y \right)  &= \varphi \left( x, y\right) , \quad \left(x , y \right) \in \mathbb{R}^2 .
\end{align*}
In fact, since  $0 < \underline{\sigma}^2$, $u$ is a classical solution. 

Let the functions $S$ and $\tilde{S}$ be given by 
\begin{equation*}
S \left( x , y \right) = \left( \frac{1}{\sqrt{\alpha}} y ,  \sqrt{\alpha} x \right) 
\end{equation*}
for all $\left(x, y \right) \in \mathbb{R}^2$ and
\begin{equation*}
\tilde{S} \left( t, x, y\right) = \left( t, \frac{1}{\sqrt{\alpha}} y ,  \sqrt{\alpha}x \right) 
\end{equation*}
for all $\left(t, x, y \right) \in \left[  \left. 0 , \infty \right. \right) \times \mathbb{R}^2$. 
Define a function $v$ by 
\begin{equation*}
v = u \circ \tilde{S} .
\end{equation*}
Then $v$ is a classical solution of 
\begin{align*}
\partial_t v - \bar{G} \left( \partial^{2}_{xx} v \right) - \alpha \bar{G} \left( \partial_{yy}^2 v \right) &= 0 , \quad \left( t , x ,y \right) \in \left( 0 , \infty \right) \times \mathbb{R}^2 \\
v \left( 0 , x, y \right) &=  \left( \varphi \circ S\right) \left( x, y\right) , \quad \left(x , y \right) \in \mathbb{R}^2 .
\end{align*}
Hence, 
\begin{equation*}
v \left( t, x, y \right) = \hat{\mathbb{E}} \left[ \left( \varphi \circ S  \right) \left( \left( x, y \right) + \sqrt{t} \left( W_1 , W_2 \right) \right) \right]  .
\end{equation*}
In particular, 
\begin{align*}
&\hat{\mathbb{E}} \left[ \varphi \left( \frac{1}{\sqrt{\alpha}}  W_2 , \sqrt{\alpha}  W_1 \right) \right] \\
&= \hat{\mathbb{E}} \left[ \left( \varphi \circ S  \right) \left( \left( 0, 0 \right) + \sqrt{1} \left( W_1 , W_2 \right) \right) \right] \\
&= v \left( 1, 0, 0 \right) \\
&= \left( u \circ \tilde{S} \right) \left(1, 0, 0\right) \\
&= u \left( 1, 0, 0 \right) \\
&= \hat{\mathbb{E}} \left[ \varphi \left( \left( 0, 0 \right) + \sqrt{1} \left( W_1 , W_2 \right) \right) \right] \\
&= \hat{\mathbb{E}} \left[ \varphi \left(  W_1 ,  W_2 \right) \right] . 
\end{align*}
Since $\varphi \in C_{l.Lip} \left( \mathbb{R}^2 \right)$ was arbitrary, applying this to the function $\varphi \left (x , y \right) = x y^2$ gives
\begin{equation*}
\sqrt{\alpha} \hat{\mathbb{E}} \left[ W_2 W_{1}^2 \right] = \hat{\mathbb{E}} \left[ W_1 W_{2}^2 \right] .
\end{equation*}

On the other hand, Example \ref{std examp} implies 
\begin{equation*}
\hat{\mathbb{E}} \left[ W_2 W_{1}^2 \right] = 0 
\end{equation*}
and 
\begin{equation*}
\hat{\mathbb{E}} \left[ W_1 W_{2}^2 \right] >0 , 
\end{equation*}
a contradiction. It follows that $W$ is not a $2$-dimensional $G$-normal random vector in this case.

To finish the proof, assume that $W_1$ is independent from $W_2$. If $W=\left( W_1 , W_2 \right)^\top$ were 2-dimensional $G$-normal random vector, then
\begin{equation*}
 \begin{bmatrix}
0 & 1\\
1 & 0\\
\end{bmatrix} \cdot  \begin{bmatrix}
W_1 \\
W_2 \\
\end{bmatrix} = \begin{bmatrix}
W_2 \\
W_1 \\
\end{bmatrix}
\end{equation*}
would be a $2$-dimensional $G$-normal random vector by Lemma \ref{basic props known}. This is impossible by what we just considered, so the result holds.
\end{proof}

The proof of the theorem is now straightforward.

\begin{proof}
Let $A = \left( a_{ij} \right)$. To prove (i), suppose $v = \left( v_1 , \dots ,  v_m \right)^\top \in \mathbb{R}^m$. For all $\varphi \in C_{l.Lip} \left( \mathbb{R} \right)$, 
\begin{align*}
\hat{\mathbb{E}} \left[ \varphi \left( \langle v , AY \rangle \right) \right] &= \hat{\mathbb{E}} \left[ \varphi \left( \left( v_1 \displaystyle\sum_{k = 1}^n a_{1k} Y_k \right) + \cdots + \left( v_m \displaystyle\sum_{k = 1}^n a_{mk} Y_k \right) \right) \right] \\
&= \hat{\mathbb{E}} \left[ \varphi \left( \left( \displaystyle\sum_{k = 1}^m  v_k a_{k1}  \right) Y_1 + \cdots + \left( \displaystyle\sum_{k = 1}^m v_k a_{kn}  \right) Y_n \right) \right] \\
&= \hat{\mathbb{E}} \left[ \varphi \left( \sqrt{\left( \displaystyle\sum_{k = 1}^m v_k a_{k1}  \right)^2 + \cdots + \left( \displaystyle\sum_{k = 1}^m v_k a_{kn}  \right)^2 }  Y_1 \right) \right] \\
&= \hat{\mathbb{E}} \left[ \varphi \left( \left\| v^\top A \right\| Y_1 \right) \right] ,
\end{align*}
which directly follows from Definition \ref{def g norm}.
By Lemma \ref{basic props known},
\begin{equation*}
\langle v , AY \rangle \sim \left\| v^\top A \right\| Y_1 \sim \mathcal{N} \left( 0, \left[ \left\| v^\top A \right\|^2 \underline{\sigma}^{2} , \left\| v^\top A\right\|^2 \overline{\sigma}^{2} \right] \right) .
\end{equation*}

For (ii), since $A$ has rank less than or equal to one, we can find $u = \left( u_1 , \dots , u_m \right)^\top \in \mathbb{R}^m$ and $w = \left( w_1 , \dots , w_n \right)^\top \in \mathbb{R}^n$ such that 
\begin{equation*}
A = u  w^\top .
\end{equation*}
By Lemma \ref{basic props known},
\begin{equation*}
w^\top Y  \sim \left( \sqrt{\displaystyle\sum_{i = 1}^n w_{i}^2 } \right) Y_1 \sim \mathcal{N} \left( 0, \left[ \left\| w \right\|^2 \underline{\sigma}^{2}  , \left\| w \right\|^2 \overline{\sigma}^{2} \right] \right)  .
\end{equation*}
Since $AY$ is given by 
\begin{equation*}
AY = u  \left( w^{\top}  Y \right) ,
\end{equation*}
another application of Lemma \ref{basic props known} implies the result.

To see that (iii) holds, let $B$ be the $2 \times n$ matrix 
\begin{equation*}
B = \begin{bmatrix}
1 & 0 & \cdots & \cdots & 0 \\
0 & 1 & 0 & \cdots & 0 \\
\end{bmatrix} .
\end{equation*}
Since
\begin{equation*}
\left( B A^{-1} \right) AY = BY = \begin{bmatrix}
Y_1 \\
Y_2 \\
\end{bmatrix} ,
\end{equation*}
Lemmas \ref{basic props known} and \ref{thm res 3} show that $AY$ cannot be an $n$-dimensional $G$-normal random vector.
\end{proof}

\subsection{Connections Between Covariance Uncertainty and Independence}\label{cov indep sect}

Classically, there is a tight relationship between the covariance matrix of a normal random vector and the independence of its coordinates:
\begin{quote}
The covariance matrix of a normal random vector is diagonal if and only if its coordinates are (mutually) independent normal random variables. 
\end{quote}
Once again, the analogous situation is more subtle in the $G$-setting. For instance, the forward direction is false.

\begin{thm}\label{diag not indep}
Let 
\begin{equation*}
\Gamma = \left\{ \begin{bmatrix}
r_1 & 0\\
0 & r_2 \\
\end{bmatrix} : r_1 , r_2 \in \left[ \underline{\sigma}^2 , \overline{\sigma}^2 \right] \right\} ,
\end{equation*}
so $X = \left( X_1 , X_2 \right)^\top$ and $X \sim \mathcal{N} \left(0 , \Gamma \right)$. $X_1$ is not independent from $X_2$ and vice versa.
\end{thm}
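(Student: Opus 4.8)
The plan is to obtain this as an immediate corollary of Lemma \ref{thm res 3} with $\alpha = 1$, so the only genuine work is to verify that the coordinates $X_1$ and $X_2$ have exactly the marginal distributions required by that lemma. First I would record that $\Gamma$ really is an admissible parameter set: it is the image of the box $\left[ \underline{\sigma}^2 , \overline{\sigma}^2 \right]^2$ under an injective linear map, hence bounded, closed, and convex, and it lies in $\mathbb{S}^+ \left( 2 \right)$ since $0 < \underline{\sigma}^2$; thus $X \sim \mathcal{N} \left( 0 , \Gamma \right)$ is a bona fide $2$-dimensional $G$-normal random vector. Next I would compute the distribution of each coordinate via Lemma \ref{basic props known}(iv): taking $M = \left[ 1 \ \ 0 \right]$ gives $X_1 = MX \sim \mathcal{N} \left( 0 , M \Gamma M^\top \right)$, and a one-line matrix computation shows $M \Gamma M^\top = \left\{ r_1 : r_1 \in \left[ \underline{\sigma}^2 , \overline{\sigma}^2 \right] \right\} = \left[ \underline{\sigma}^2 , \overline{\sigma}^2 \right]$; symmetrically $X_2 \sim \mathcal{N} \left( 0 , \left[ \underline{\sigma}^2 , \overline{\sigma}^2 \right] \right)$. (Equivalently, one reads this off Lemma \ref{basic props known}(iii) applied to $v = e_1$ and $v = e_2$, taking care to use the $(1,1)$ slot of $\Gamma$ for $X_1$ and the $(2,2)$ slot for $X_2$.)

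Then I would argue by contradiction. Suppose that $X_1$ is independent from $X_2$, or that $X_2$ is independent from $X_1$. Since $X_1$ and $X_2$ are $G$-normal random variables with $X_1 \sim \mathcal{N} \left( 0 , \left[ \underline{\sigma}^2 , \overline{\sigma}^2 \right] \right)$ and $X_2 \sim \mathcal{N} \left( 0 , 1 \cdot \left[ \underline{\sigma}^2 , \overline{\sigma}^2 \right] \right)$, Lemma \ref{thm res 3} (with $\alpha = 1$, so that the disjunction in its hypothesis is met) forces $\left( X_1 , X_2 \right)^\top$ not to be a $2$-dimensional $G$-normal random vector. This contradicts $X = \left( X_1 , X_2 \right)^\top \sim \mathcal{N} \left( 0 , \Gamma \right)$. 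Hence neither coordinate is independent from the other, which is precisely the assertion since the single application of Lemma \ref{thm res 3} rules out both directions at once.

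I do not anticipate any serious obstacle: all the hard analysis — setting up the associated $G$-heat equation for $W$, exploiting the coordinate-swap symmetry, and colliding this symmetry with the asymmetry of independence via Example \ref{std examp} — has already been isolated in Lemma \ref{thm res 3}. The only things needing a moment's care are the admissibility of $\Gamma$ and the bookkeeping of which diagonal entry of $\Gamma$ governs which coordinate, both routine. It may be worth remarking afterward that, in contrast to the classical case, the diagonality of $\Gamma$ here does \emph{not} say that $X$ carries an unknown-but-fixed diagonal covariance matrix; this is exactly the feature responsible for the failure of the classical forward implication.
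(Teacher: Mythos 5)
Your proposal is correct and follows essentially the same route as the paper: compute that $X_1, X_2 \sim \mathcal{N}\left(0, \left[\underline{\sigma}^2, \overline{\sigma}^2\right]\right)$ (the paper does this via $G(A) = \frac{1}{2}\sup_{B\in\Gamma}\mathrm{tr}[AB]$ rather than via $M\Gamma M^\top$, a cosmetic difference), then invoke Lemma \ref{thm res 3} with $\alpha = 1$ to rule out independence in either direction. No gaps.
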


\begin{proof}
We will proceed by computing the distributions of the random variables $X_1$ and $X_2$. Then we will invoke Lemma \ref{thm res 3}, the impetus for our choice of this specific $\Gamma$.

Recall that for $A \in \mathbb{S} \left( 2 \right)$, 
\begin{equation*}
\frac{1}{2} \hat{\mathbb{E}} \left[ \langle AX, X \rangle \right] = G \left(A \right)  = \frac{1}{2} \displaystyle\sup_{B \in \Gamma} \text{tr} \left[ A B \right]  .
\end{equation*}
 In particular, 
\begin{align*}
\frac{1}{2} \hat{\mathbb{E}} \left[ X_1^2 \right] &=G \left( \begin{bmatrix}
1 & 0 \\
0 & 0 \\
\end{bmatrix} \right) \\
&=  \frac{1}{2} \displaystyle\sup_{B \in \Gamma} \text{tr} \left[
\begin{bmatrix}
1 & 0 \\
0 & 0 \\
\end{bmatrix} B \right] \\
&= \frac{1}{2} \displaystyle\sup_{r_1 \in \left[ \underline{\sigma}^2 , \overline{\sigma}^2 \right] }  r_1 \\
&= \frac{1}{2} \overline{\sigma}^2  , 
\end{align*}
i.e.,
\begin{equation*}
\hat{\mathbb{E}} \left[ X_1^2 \right] =  \overline{\sigma}^2 .
\end{equation*}
A similar calculation gives that
\begin{equation*}
\hat{\mathbb{E}} \left[ X_2^2 \right] =  \overline{\sigma}^2
\end{equation*}
and
\begin{equation*}
-\hat{\mathbb{E}} \left[ - X_1^2 \right] =-\hat{\mathbb{E}} \left[ - X_2^2 \right] =  \underline{\sigma}^2  .
\end{equation*}
By Lemma \ref{basic props known}, $X_1$ and $X_2$ are both $G$-normal random variables and 
\begin{equation*}
X_1 , X_2 \sim \mathcal{N} \left( 0 , \left[ \underline{\sigma}^2 , \overline{\sigma}^2 \right] \right) .
\end{equation*}
Lemma \ref{thm res 3} implies that $X_1$ cannot be independent from $X_2$ and vice versa.
\end{proof}

The backward direction bears a stronger resemblance to the classical case, although with a few unforeseen twists.

\begin{thm}\label{thm res 15}
Suppose that there exists a permutation $\pi \in S_n$ such that for all $1 \leq i \leq n-1$, $X_{\pi\left(i +1 \right)}$ is independent from $\left( X_{\pi\left(1 \right)} , \dots, X_{\pi\left(i \right)}\right)$. Then 
\begin{enumerate}[(i)]
\item for $\underline{\sigma}_i^2$, $\overline{\sigma}_i^2$ such that  $X_i \sim \mathcal{N} \left( 0, \left[ \underline{\sigma}_i^2 , \overline{\sigma}_i^2 \right] \right)$,
\begin{equation*}
\Gamma = \left\{ \text{diag} \left[ r_1 , \dots , r_n \right] : r_i \in \left[ \underline{\sigma}_i^2 , \overline{\sigma}_i^2 \right] \right\} ;
\end{equation*} 
\item for any $1 \leq i \leq n$ such that $0 < \underline{\sigma}_i^2 < \overline{\sigma}_i^2$, 
\begin{equation*}
\alpha \left[ \underline{\sigma}_i^2 , \overline{\sigma}_i^2 \right] \ne \left[ \underline{\sigma}_j^2 , \overline{\sigma}_j^2 \right]
\end{equation*}
for all $j \ne i$ and $\alpha > 0$; and\\
\item for any $i < j$, $X_{\pi \left( i \right)}$ is not independent from $X_{\pi \left(j \right)}$ if either of the following hold: \begin{enumerate}
\item $\underline{\sigma}_{\pi \left(i\right)}^2 < \overline{\sigma}_{\pi \left( i \right)}^2$ and $0 < \overline{\sigma}_{\pi \left(j \right)}^2$, or\\
\item $\underline{\sigma}_{\pi \left(j\right)}^2 < \overline{\sigma}_{\pi \left( j \right)}^2$ and $0 < \overline{\sigma}_{\pi \left(i \right)}^2$.
\end{enumerate} 
\end{enumerate}
\end{thm}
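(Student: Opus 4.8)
The plan is to leverage Lemma \ref{thm res 3} together with the $G$-heat equation computation that underlies it, exactly as in the proofs of Lemma \ref{thm res 3} and Theorem \ref{diag not indep}, but now carried out in $n$ dimensions. For part (i), I would first relabel the coordinates along $\pi$ so that, without loss of generality, $X_{i+1}$ is independent from $(X_1,\dots,X_i)$ for each $i$. Then I would compute $G(A)=\tfrac12\hat{\mathbb{E}}[\langle AX,X\rangle]$ for an arbitrary $A=(a_{k\ell})\in\mathbb{S}(n)$ by peeling off one coordinate at a time: writing $\langle AX,X\rangle=\sum_{k,\ell}a_{k\ell}X_kX_\ell$ and conditioning on $X_1,\dots,X_{n-1}$ via the independence of $X_n$, the cross terms $a_{kn}X_kX_n$ with $k<n$ have no contribution because $\hat{\mathbb{E}}[X_n]=\hat{\mathbb{E}}[-X_n]=0$ (Lemma \ref{basic props known}(i)) so that \eqref{lin mean cert} applies, and the pure term $a_{nn}X_n^2$ contributes $\bar G_n(a_{nn}):=\tfrac12(\overline{\sigma}_n^2 a_{nn}^+-\underline{\sigma}_n^2 a_{nn}^-)$. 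Iterating downward gives $G(A)=\sum_{i=1}^n \bar G_i(a_{ii})$, from which the representation \eqref{G eqn 2} forces $\Gamma=\{\operatorname{diag}[r_1,\dots,r_n]:r_i\in[\underline{\sigma}_i^2,\overline{\sigma}_i^2]\}$; here the values $\underline{\sigma}_i^2,\overline{\sigma}_i^2$ are indeed the ones describing the marginal distribution of $X_i$ because $\hat{\mathbb{E}}[X_i^2]=2G(e_{ii})=\overline{\sigma}_i^2$ and $-\hat{\mathbb{E}}[-X_i^2]=\underline{\sigma}_i^2$.

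For part (ii), suppose toward a contradiction that $0<\underline{\sigma}_i^2<\overline{\sigma}_i^2$ and $\alpha[\underline{\sigma}_i^2,\overline{\sigma}_i^2]=[\underline{\sigma}_j^2,\overline{\sigma}_j^2]$ for some $j\ne i$ and $\alpha>0$. By part (i), the $2$-dimensional random vector $(X_i,X_j)^\top$ is obtained from $X$ by a coordinate-projection matrix, hence is $G$-normal by Lemma \ref{basic props known}(iv), with $X_i\sim\mathcal{N}(0,[\underline{\sigma}_i^2,\overline{\sigma}_i^2])$ and $X_j\sim\mathcal{N}(0,[\alpha\underline{\sigma}_i^2,\alpha\overline{\sigma}_i^2])$. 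Moreover the independence hypothesis along $\pi$ gives, for whichever of $i,j$ comes later in the ordering, that it is independent from the earlier one (since independence from a vector implies independence from any subvector — this is immediate from the definition by choosing test functions that ignore the other coordinates). So either $X_j$ is independent from $X_i$ or vice versa, and Lemma \ref{thm res 3} then says $(X_i,X_j)^\top$ is \emph{not} $2$-dimensional $G$-normal, a contradiction. (The hypothesis $\underline{\sigma}_i^2>0$ is what lets us apply Lemma \ref{thm res 3}, whose proof needs the $G$-heat equation solution to be classical; this is also where the background on maximal distributions in Proposition \ref{hu+li thm} enters if one instead wants to handle the degenerate marginals, which is presumably the role of the $C_{b.Lip}$ remark.)

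For part (iii), fix $i<j$ and assume (a) holds, i.e., $\underline{\sigma}_{\pi(i)}^2<\overline{\sigma}_{\pi(i)}^2$ and $0<\overline{\sigma}_{\pi(j)}^2$. Suppose for contradiction that $X_{\pi(i)}$ is independent from $X_{\pi(j)}$. From part (i) and the reasoning above, $(X_{\pi(i)},X_{\pi(j)})^\top$ is a $2$-dimensional $G$-normal vector with diagonal $\Gamma$, and by the independence structure along $\pi$, $X_{\pi(j)}$ is independent from $X_{\pi(i)}$. Now apply Example \ref{std examp} with $X=X_{\pi(i)}$ (which satisfies (i) and (ii) of that example because $\hat{\mathbb{E}}[X_{\pi(i)}]=-\hat{\mathbb{E}}[-X_{\pi(i)}]=0$ and $\hat{\mathbb{E}}[|X_{\pi(i)}|]>0$, the latter since $\overline{\sigma}_{\pi(i)}^2>0$) and $Y=X_{\pi(j)}$ (which satisfies (iii) of that example because $\hat{\mathbb{E}}[X_{\pi(j)}^2]=\overline{\sigma}_{\pi(j)}^2>0=-\hat{\mathbb{E}}[-X_{\pi(j)}^2]$ when $\underline{\sigma}_{\pi(j)}^2=0$, and more generally $\hat{\mathbb{E}}[X_{\pi(j)}^2]>-\hat{\mathbb{E}}[-X_{\pi(j)}^2]$ once $\underline\sigma_{\pi(j)}^2<\overline\sigma_{\pi(j)}^2$; the remaining case $\underline\sigma_{\pi(j)}^2=\overline\sigma_{\pi(j)}^2>0$ must be treated separately — see below). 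With $X_{\pi(i)}$ independent from $X_{\pi(j)}$ we get $\hat{\mathbb{E}}[X_{\pi(i)}X_{\pi(j)}^2]=0$, while with $X_{\pi(j)}$ independent from $X_{\pi(i)}$ we get $\hat{\mathbb{E}}[X_{\pi(i)}X_{\pi(j)}^2]>0$ — a contradiction. Case (b) is symmetric. The main obstacle I anticipate is handling the degenerate marginals uniformly: when $X_{\pi(j)}$ has no variance uncertainty but is nonconstant (so Example \ref{std examp}(iii) fails), one instead uses Proposition \ref{hu+li thm} — $X_{\pi(i)}$ has distributional uncertainty and $X_{\pi(j)}$ is not a constant, so mutual independence would force both to be maximally distributed, contradicting that a $G$-normal variable with $\underline{\sigma}^2<\overline{\sigma}^2$ is not maximal — but one must only invoke this when independence actually holds in \emph{both} directions, which is why the statement is phrased the way it is, and care is needed that the $C_{b.Lip}$ versus $C_{l.Lip}$ discrepancy noted in the background does not bite (it does not, since $G$-normal and maximal distributions are determined by their action on bounded test functions via a standard truncation argument).
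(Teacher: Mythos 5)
Your proposal is correct and follows essentially the same route as the paper: part (i) by iterated conditioning along $\pi$ to kill the cross terms via (\ref{lin mean cert}) and identify $G(A)=\sum_i\bar G_i(a_{ii})$, and part (ii) by projecting onto $(X_i,X_j)^\top$ with Lemma \ref{basic props known}(iv) and contradicting Lemma \ref{thm res 3}. The only difference is in (iii), where the paper dispenses with the case analysis entirely and deduces the whole claim in one line from Proposition \ref{hu+li thm} (mutual independence would force both coordinates to be maximally distributed, impossible for a $G$-normal variable with $\underline{\sigma}^2<\overline{\sigma}^2$) --- precisely the argument you correctly reserve for the degenerate subcase $\underline{\sigma}_{\pi(j)}^2=\overline{\sigma}_{\pi(j)}^2>0$, so your detour through Example \ref{std examp} is sound but unnecessary.
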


(i) is exactly as expected, but (ii) and (iii) are highly nonintuitive: no remotely similar conditions are present in the classical theory. Observe that while this theorem places substantial restrictions on $\Gamma$ if the coordinates of $X$ satisfy appropriate independence conditions, it does not address the existence of such an $X$. This issue is still unresolved. 

\begin{proof}
We begin with (i). By Lemma \ref{basic props known} we can find $0 \leq \underline{\sigma}_i^2 \leq \overline{\sigma}_i^2$ such that
\begin{equation*}
X_i \sim \mathcal{N} \left( 0, \left[ \underline{\sigma}_i^2 , \overline{\sigma}_i^2 \right] \right)
\end{equation*}
for all $1 \leq i \leq n$. Also, for any $A = \left( a_{ij} \right) \in \mathbb{S} \left(n \right)$,
\begin{equation*}
\frac{1}{2} \hat{\mathbb{E}} \left[ \langle AX , X \rangle \right] = G \left( A \right) = \frac{1}{2} \displaystyle\sup_{B \in \Gamma} \text{tr} \left[ AB \right]   .
\end{equation*}
Now 
\begin{align*}
\frac{1}{2} \hat{\mathbb{E}} \left[ \left\langle AX , X \right\rangle \right] &= \frac{1}{2} \hat{\mathbb{E}} \left[\displaystyle\sum_{i = 1}^n a_{ii}  X_{i}^2 + 2 \displaystyle\sum_{i = 1}^{n-1} \displaystyle\sum_{j = i + 1}^n a_{ij} X_i X_j  \right] \\
&=  \frac{1}{2} \hat{\mathbb{E}} \left[\displaystyle\sum_{i = 1}^n a_{\pi \left(i \right) \pi \left(i \right)}  X_{\pi \left(i \right)}^2 + 2 \displaystyle\sum_{i = 1}^{n-1} \displaystyle\sum_{j = i + 1}^n a_{\pi \left(i \right) \pi \left(j \right)} X_{\pi \left(i \right) } X_{\pi \left(j \right) }  \right]  . 
\end{align*}
 For any $1 \leq i < j \leq n$, 
\begin{equation*}
\hat{\mathbb{E}} \left[ X_{\pi \left(i\right)} X_{\pi \left(j\right)} \right] = \hat{\mathbb{E}} \left[ \hat{\mathbb{E}} \left[  \bar{x} X_{\pi \left(j\right)} \right]_{ \bar{x}=X_{\pi \left(i\right)}} \right] = 0
\end{equation*}
and 
\begin{equation*}
-\hat{\mathbb{E}} \left[- X_{\pi \left(i\right)} X_{\pi \left(j\right)} \right] = -\hat{\mathbb{E}} \left[ \hat{\mathbb{E}} \left[  -\bar{x} X_{\pi \left(j\right)} \right]_{ \bar{x}=X_{\pi \left(i\right)} } \right] = 0 .
\end{equation*}
By repeated application of (\ref{lin mean cert}),
\begin{align*}
\frac{1}{2} \hat{\mathbb{E}} \left[ \left\langle AX , X \right\rangle \right] &= \frac{1}{2} \hat{\mathbb{E}} \left[\displaystyle\sum_{i = 1}^n a_{\pi \left(i \right) \pi \left(i \right)}  X_{\pi \left(i\right)}^2 \right] \\
&= \frac{1}{2} \hat{\mathbb{E}} \left[ \hat{\mathbb{E}} \left[ \cdots \hat{\mathbb{E}} \left[ \displaystyle\sum_{i = 1}^{n-1} a_{\pi \left(i \right) \pi \left(i \right)} \bar{x}_{\pi\left(i\right)}^2 + a_{\pi \left(n \right) \pi \left(n \right)} X_{\pi\left(n \right)}^2 \right]_{\bar{x}_{\pi \left(n-1 \right)}=X_{\pi\left(n -1 \right)} } \cdots \right]_{ \bar{x}_{\pi \left(1 \right)}=X_{\pi\left(1 \right)}} \right] \\
&= \frac{1}{2} \displaystyle\sum_{i = 1}^n  \left( \overline{\sigma}_{\pi \left(i \right)}^2 \left(a_{\pi \left(i \right) \pi \left(i \right)}^+ \right) - \underline{\sigma}_{\pi \left(i \right)}^2 \left(a_{\pi \left(i\right)\pi \left(i \right)}^- \right) \right) \\
&= \frac{1}{2} \displaystyle\sum_{i = 1}^n  \left( \overline{\sigma}_{i}^2 \left(a_{ii}^+ \right) - \underline{\sigma}_{i}^2 \left(a_{ii}^- \right) \right) .
\end{align*}

Hence, we need to find a bounded, closed, convex subset $\Gamma \subset \mathbb{S}^+ \left( n \right)$ such that 
\begin{equation*}
\displaystyle\sum_{i = 1}^n  \left( \overline{\sigma}_{i}^2 \left(a_{ii}^+ \right) - \underline{\sigma}_{i}^2 \left(a_{ii}^- \right) \right)  =  \displaystyle\sup_{B \in \Gamma} \text{tr} \left[ AB \right]
\end{equation*}
for any $A = \left( a_{ij} \right) \in \mathbb{S} \left(n \right)$. One easily verifies that 
\begin{equation*}
\Gamma = \left\{ \text{diag} \left[ r_1 , \dots , r_n \right] : r_i \in \left[ \underline{\sigma}_i^2 , \overline{\sigma}_i^2 \right] \right\} .
\end{equation*}

To prove (ii), suppose that $i \ne j$ and $0 < \underline{\sigma}_i^2 < \overline{\sigma}_i^2$. Let $B_{ij}$ be the $2 \times n$ matrix of 0's and 1's such that 
\begin{equation*}
\begin{bmatrix}
X_i \\
X_j \\ 
\end{bmatrix} = B_{ij} X  .
\end{equation*}
Lemma \ref{basic props known} implies that 
\begin{equation*}
\begin{bmatrix}
X_i \\
X_j \\ 
\end{bmatrix}
\end{equation*}
is a 2-dimensional $G$-normal random vector. By Lemma \ref{thm res 3}, since either $X_j$ is independent from $X_i$ or vice versa, (ii) holds.

(iii) is an immediate consequence of Proposition \ref{hu+li thm}. One might object that the space of test functions in \cite{hu+li} is $C_{b.Lip} \left( \mathbb{R}^n \right)$ instead of $C_{l.Lip} \left( \mathbb{R}^n \right)$, but this issue is addressed in Example 21 of that reference.
\end{proof}

An additional classical result bridging the form of a normal random vector's covariance matrix and the independence of its coordinates is as follows:
\begin{quote}
If $Z$ is an $n$-dimensional normal random vector, then there exists an invertible $n \times n$ matrix $A$ such that the coordinates of $AZ$ are independent.
\end{quote}
The related statement is false for a $G$-normal random vector. There are several possible approaches here. For example, by Lemma \ref{basic props known} and Theorem \ref{thm res 15}, it suffices to construct a $\Gamma$ such that  $A \Gamma A^\top$ contains non-diagonal matrices for all invertible $n \times n$ matrices $A$. Our method uses a more straightforward choice of $\Gamma$ but also incorporates Lemma \ref{thm res 3}.

\begin{thm}\label{invert not indep}
Let
\begin{equation*}
\Gamma = \left\{ \begin{bmatrix}
r_1 & 0\\
0 & r_2 \\
\end{bmatrix} : r_1 , r_2  \in \left[ \underline{\sigma}^2 , \overline{\sigma}^2 \right]  \right\} .
\end{equation*} There is no invertible $2 \times 2$ real matrix $A$ such that the coordinates of $AX$ are independent.
\end{thm}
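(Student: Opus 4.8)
The plan is to argue by contradiction and reduce everything to Lemma \ref{thm res 3}. Suppose there were an invertible $2 \times 2$ real matrix $A$ such that the coordinates of $AX$ are independent; writing $a^\top = \left( a_{11} , a_{12} \right)$ and $b^\top = \left( a_{21} , a_{22} \right)$ for the two rows of $A$, the independence assumption means (for $n = 2$) that there is a permutation $\pi \in S_2$ with $\left( AX \right)_{\pi \left( 2 \right)}$ independent from $\left( AX \right)_{\pi \left( 1 \right)}$, i.e., one coordinate of $AX$ is independent from the other. By Lemma \ref{basic props known}, $AX \sim \mathcal{N} \left( 0 , A \Gamma A^\top \right)$ is a $2$-dimensional $G$-normal random vector. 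I would reach a contradiction by showing that the one-dimensional distributions of the two coordinates of $AX$ are automatically related by a positive scalar (and carry genuine uncertainty), which by Lemma \ref{thm res 3} is incompatible with the independence hypothesis.

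The first step is to identify these distributions. Since $\left( AX \right)_1 = \langle a , X \rangle$, Lemma \ref{basic props known} gives $\left( AX \right)_1 \sim \mathcal{N} \left( 0 , \left[ \underline{s}_1^2 , \overline{s}_1^2 \right] \right)$ with $\overline{s}_1^2 = \hat{\mathbb{E}} \left[ \langle a , X \rangle^2 \right]$ and $\underline{s}_1^2 = - \hat{\mathbb{E}} \left[ - \langle a , X \rangle^2 \right]$. Writing $\langle a , X \rangle^2 = \langle \left( a a^\top \right) X , X \rangle$ and using $\hat{\mathbb{E}} \left[ \langle M X , X \rangle \right] = \sup_{B \in \Gamma} \text{tr} \left[ M B \right]$, one computes
\begin{equation*}
\overline{s}_1^2 = \displaystyle\sup_{r_1 , r_2 \in \left[ \underline{\sigma}^2 , \overline{\sigma}^2 \right]} \left( a_{11}^2 r_1 + a_{12}^2 r_2 \right) = \left\| a \right\|^2 \overline{\sigma}^2 , \qquad \underline{s}_1^2 = \left\| a \right\|^2 \underline{\sigma}^2 ,
\end{equation*}
where the decoupling of the supremum is exactly where the shape of $\Gamma$ (both diagonal entries ranging over the \emph{same} interval) is used. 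Identically, $\left( AX \right)_2 \sim \mathcal{N} \left( 0 , \left[ \left\| b \right\|^2 \underline{\sigma}^2 , \left\| b \right\|^2 \overline{\sigma}^2 \right] \right)$.

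Since $A$ is invertible, neither row vanishes, so $\left\| a \right\|^2 > 0$ and $\left\| b \right\|^2 > 0$; combined with $0 < \underline{\sigma}^2 < \overline{\sigma}^2$ this yields $0 < \left\| a \right\|^2 \underline{\sigma}^2 < \left\| a \right\|^2 \overline{\sigma}^2$. Setting $\alpha = \left\| b \right\|^2 / \left\| a \right\|^2 > 0$, we have $\left( AX \right)_1 \sim \mathcal{N} \left( 0 , \left[ \left\| a \right\|^2 \underline{\sigma}^2 , \left\| a \right\|^2 \overline{\sigma}^2 \right] \right)$ and $\left( AX \right)_2 \sim \mathcal{N} \left( 0 , \left[ \alpha \left\| a \right\|^2 \underline{\sigma}^2 , \alpha \left\| a \right\|^2 \overline{\sigma}^2 \right] \right)$, which are precisely the hypotheses of Lemma \ref{thm res 3} (with $\underline{\sigma}^2 , \overline{\sigma}^2$ there replaced by $\left\| a \right\|^2 \underline{\sigma}^2 , \left\| a \right\|^2 \overline{\sigma}^2$). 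As one of the two coordinates is independent from the other, Lemma \ref{thm res 3} forces $AX$ not to be a $2$-dimensional $G$-normal random vector, contradicting the consequence of Lemma \ref{basic props known} noted above; hence no such $A$ exists. I do not expect a genuine obstacle here: the only real content is correctly computing the distributions of the coordinates of $AX$ and observing that $\Gamma$ was chosen so that these always come out as positive scalar multiples of one another with $\underline{\sigma}^2 < \overline{\sigma}^2$ preserved — everything substantive is already packaged inside Lemma \ref{thm res 3}.
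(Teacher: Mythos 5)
Your proof is correct, but it takes a genuinely more direct route than the paper's. The paper first invokes Theorem \ref{thm res 15}(i) (applied to $AX$) to conclude that $A \Gamma A^\top$ must consist of diagonal matrices, deduces from the explicit form of $A \operatorname{diag}(r_1, r_2) A^\top$ that $a_{11}a_{21} = a_{12}a_{22} = 0$, splits into the diagonal and anti-diagonal cases for $A$, and only then identifies the marginal laws of the coordinates and applies Lemma \ref{thm res 3}. You skip the structural constraint on $A$ entirely: for an \emph{arbitrary} invertible $A$ with rows $a, b$, the computation $\hat{\mathbb{E}}\left[\langle a, X\rangle^2\right] = \sup_{r_1, r_2 \in \left[\underline{\sigma}^2, \overline{\sigma}^2\right]} \left(a_{11}^2 r_1 + a_{12}^2 r_2\right) = \left\|a\right\|^2 \overline{\sigma}^2$ (and its counterpart for the lower variance) shows the two coordinates always have laws $\mathcal{N}\left(0, \left[\left\|a\right\|^2 \underline{\sigma}^2, \left\|a\right\|^2 \overline{\sigma}^2\right]\right)$ and $\mathcal{N}\left(0, \left[\left\|b\right\|^2 \underline{\sigma}^2, \left\|b\right\|^2 \overline{\sigma}^2\right]\right)$, which are positive scalar multiples of one another with the strict inequalities preserved because $A$ invertible forces $\left\|a\right\|, \left\|b\right\| > 0$; Lemma \ref{thm res 3} then applies immediately. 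This is where the specific shape of $\Gamma$ (both diagonal entries ranging over the same interval) enters, exactly as you flag. What your version buys is brevity and independence from Theorem \ref{thm res 15}; what the paper's version buys is the extra structural information that independence of the coordinates would force $A$ to be diagonal or anti-diagonal, which makes the role of Theorem \ref{thm res 15} in constraining $\Gamma$ visible. Both arguments bottom out in Lemma \ref{thm res 3}, and your verification of its hypotheses (positivity of the scalar $\alpha$ and of the rescaled $\underline{\sigma}^2 < \overline{\sigma}^2$) is complete.
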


\begin{proof}
Suppose that for some invertible $2 \times 2$ real matrix $A = \left( a_{ij} \right)$, either $W_2$ is independent from $W_1$ or vice versa, where $\left( W_1 , W_2 \right)^\top = AX$.
By Lemma \ref{basic props known}, 
\begin{equation*}
AX \sim \mathcal{N} \left( 0, A \Gamma A^\top \right) .
\end{equation*}
For all $r_1, r_2  \in \left[ \underline{\sigma}^2 , \overline{\sigma}^2 \right]$, 
\begin{equation*}
A \begin{bmatrix}
r_1 & 0\\
0 & r_2 \\
\end{bmatrix} A^\top = \begin{bmatrix}
r_1 a_{11}^2 + r_2 a_{12}^2  &  r_1 a_{11} a_{21} + r_2 a_{12} a_{22}\\
r_1 a_{11} a_{21} + r_2 a_{12} a_{22} & r_1 a_{21}^2 +  r_2 a_{22}^2 \\
\end{bmatrix}  .
\end{equation*}
From Theorem \ref{thm res 15},
\begin{equation*}
A \Gamma A^\top =  \left\{ \begin{bmatrix}
r_1 & 0\\
0 & r_2 \\
\end{bmatrix} : r_1 \in \left[ \underline{\sigma}_1^2 , \overline{\sigma}_1^2 \right] , r_2 \in \left[ \underline{\sigma}_2^2 , \overline{\sigma}_2^2 \right] \right\}
\end{equation*}
where $W_i \sim \mathcal{N} \left( 0 ,  \left[ \underline{\sigma}_i^2 ,  \overline{\sigma}_i^2 \right] \right)$.

This is only possible if 
\begin{equation*}
a_{11} a_{21} =  a_{12} a_{22} = 0  .
\end{equation*}
Since $A$ is an invertible $2 \times 2$ real matrix, it must be of the form
\begin{equation*}
A =  \begin{bmatrix}
a_{11} & 0 \\
0 & a_{22} \\
\end{bmatrix} \hspace{4mm} \text{for nonzero} \hspace{2mm} a_{11} , a_{22} \in \mathbb{R}
\end{equation*}
or 
\begin{equation*}
A =  \begin{bmatrix}
0 & a_{12} \\
a_{21} & 0 \\
\end{bmatrix} \hspace{4mm} \text{for nonzero} \hspace{2mm} a_{12} , a_{21} \in \mathbb{R} .
\end{equation*}

If the former holds, then 
\begin{equation*}
A \Gamma A^\top =  \left\{ \begin{bmatrix}
r_1 & 0\\
0 & r_2 \\
\end{bmatrix} : r_1 \in \left[ a_{11}^2 \underline{\sigma}^2 , a_{11}^2 \overline{\sigma}^2 \right] , r_2 \in \left[ a_{22}^2 \underline{\sigma}^2 , a_{22}^2 \overline{\sigma}^2 \right] \right\}  .
\end{equation*} 
In this case,
\begin{equation*}
W_1 \sim \mathcal{N} \left( 0 ,  \left[ a_{11}^2 \underline{\sigma}^2 , a_{11}^2 \overline{\sigma}^2 \right] \right) \hspace{4mm} \text{and} \hspace{4mm} W_2  \sim \mathcal{N} \left( 0 , \left[ a_{22}^2 \underline{\sigma}^2 , a_{22}^2 \overline{\sigma}^2 \right]\right)
\end{equation*}
by Lemma \ref{basic props known}, which is impossible by Lemma \ref{thm res 3}.

Similarly, $A$ cannot have the latter form, so the result holds.
\end{proof}

\bibliographystyle{plain}
\bibliography{mybib}

\end{document}